\title[Renewal Sanov theorem]{A renewal version of the Sanov theorem} 
\author[M.\ Mariani]{Mauro Mariani} 
\address{Mauro Mariani,
  Dipartimento di Matematica,
  Universit\`a degli Studi di Roma La Sapienza
Piazzale Aldo Moro 5, 00185 Roma} 
\email{mariani@mat.uniroma1.it}
 \author[L.\ Zambotti]{Lorenzo Zambotti}
 \address{LPMA (CNRS UMR. 7599) Universit\'e Paris 6 --
  Pierre et Marie Curie, U.F.R. Math\'ematiques,
 Case 188, 4 place Jussieu, 75252 Paris cedex 05}
\email{lorenzo.zambotti@upmc.fr} 
\keywords{Large deviations; Renewal processes, Sanov Theorem, Heavy tails} 
\subjclass{60K05; 60F10} 
\newtheorem{theorem}{Theorem}[section]
\newtheorem{proposition}[theorem]{Proposition}
\newtheorem{definition}[theorem]{Definition}
\newtheorem{lemma}[theorem]{Lemma}
\newtheorem{remark}[theorem]{Remark}
\newcommand{\mc}[1]{{\mathcal #1}}
\newcommand{\mf}[1]{{\mathfrak #1}}
\newcommand{\mb}[1]{{\mathbf #1}}
\newcommand{\bb}[1]{{\mathbb #1}}
\newcommand{\eps}{\varepsilon}
\newfont{\indic}{bbmss12}
\newcommand{\un}[1]{\hbox{{\indic 1}$_{#1}$}}
\newcommand{\refa}[1]{(A\ref{a:#1})}
\begin{document}
\begin{abstract}
Large deviations for the local time of a process $X_t$ are
  investigated, where $X_t=x_i$ for $t \in [S_{i-1},S_i[$ and $(x_j)$
  are i.i.d.\ random variables on a Polish space, $S_j$ is the $j$-th
  arrival time of a renewal process depending on $(x_j)$. No moment conditions are assumed on the arrival times of the renewal process.
\end{abstract}

\maketitle

\section{Main results}
\label{s:1}
\subsection{Outline of the result}
\label{ss:1.1}
 Consider an i.i.d.\ sequence $(x_i)_{i\in \bb N^+}$ in a Polish space $\mc X$, with marginal distribution $\bar \mu
$. One may define a stochastic process 
$(X_t)_{t\geq 0}$ 
on $\mc X$ by setting $X_t=x_i$ for $t\in [i-1,i[$, and consider its empirical measure $\pi_t:=\tfrac 1t\int_{[0,t[}ds\,
\delta_{X_s}$. The ergodic theorem 
then states that 
$\pi_t \to \bar \mu$ as $t\to +\infty$, while the Sanov theorem yields a finer estimate for the probability that $\pi_t$ 
is found in a small neighborhood of a given Borel probability measure $\bar \nu$ on $\mc X$.  Such probability is estimated, in the sense of large deviations, as $\exp(-t H(\bar \nu | \bar \mu))$, where $H(\bar \nu | \bar \mu)$ is the relative entropy of $\bar \nu$ with respect to $\bar \mu$. 

In this paper, we want to provide a similar result, in the case in which the time spent by the process $X_t$ at the point $x_i$ may depend on the process itself. In particular, for $\tau \colon \mc X \to [0,+\infty]$ a measurable map, define $\mc N_t:= \inf \{n \in \bb N^+\,: \: \sum_{i=1}^{n+1} \tau(x_i)\ge t\}$, and $X_t:= x_{\mc N_t+1}$. In the 
next section, the precise mathematical setting for the study of the large deviations of the empirical measure of $X_t$ is recalled, and a large deviations result is established in Section~\ref{ss:1.4}. While for $\tau \equiv 1$ one gets the classical Sanov theorem, we are mainly interested in the case where the law of $\tau$ under $\bar \mu$ features heavy tails. In such a case the Markov process $(X_t,t-\sum_{i=1}^{\mc N_t} \tau(x_i))$ does not have good ergodic properties, and the classical Donsker-Varadhan theorem is violated.

\subsection{Mathematical setting}
\label{ss:1.2}
In the following $\bb N=\{0,\,1, \ldots \}$, $\bb N^+=\bb N\setminus \{0\}$; $\mc X$ is a Polish space, that is a separable, completely metrisable topological space; a general element of $\mc X^{\bb N^+}$ will be denoted $\mb x=(x_1,\,x_2,\ldots)$; $C_b(\mc X)$ and $C_c(\mc X)$ are respectively the spaces of bounded continuous functions and compactly supported continuous functions on $\mc X$. $\mc M_1(\mc X)$ is the space positive Radon measure on $\mc X$ with total variation bounded by $1$, while $\mc P(\mc X) \subset \mc M_1(\mc X)$ is the set of Borel probability measures on $\mc X$. For $\mu \in \mc M_1(\mc X)$ and $f$ a $\mu$-integrable function, we write $\mu(f):=\int d\mu\,f$. For $\mu,\,\nu \in \mc P(\mc X)$, $\mb H(\nu|\mu) $ denotes the relative entropy of $\nu$ with respect to $\mu$:
\begin{equation}
\label{e:relent1}
\mb H(\nu|\mu):=\sup_{\varphi \in C_b(\mc X)} \nu(\varphi)-\log \mu(e^\varphi)=
\begin{cases}
\int \mu(dx) h\big(\frac{d\nu}{d\mu}\big) & \text{if $\nu <<\mu$;}
\\
+\infty & \text{otherwise;}
\end{cases}
\end{equation}
where the positive convex function $h$ is defined as $h(\varrho)=\varrho \,(\log \varrho -1)+1$.

We always consider $\mc P(X)$ equipped with the narrow (or weak) topology, namely the weakest topology such that $\mu \mapsto \mu(f)$ is continuous for all $f\in C_b(\mc X)$. In the particular case in which $\mc X$ is locally compact, we will also regard $\mc M_1(\mc X)$ as a topological space, equipped with the vague topology, namely the weakest topology such that $\mu \mapsto \mu(f)$ is continuous for all $f\in C_c(\mc X)$. $\mc P(X)$ is then a Polish space, and if $\mc X$ is locally compact $\mc M_1(\mc X)$ is a compact Polish space.

Fix a reference probability $\bar \mu \in \mc P(\mc X)$ and a measurable function
$\tau \colon \mc X \to [0,+\infty]$; $\tau(x)$ has to be interpreted as the time
elapsed at $x$. $\bar \mu$ and $\tau$ are the only 'inputs' of the problem.

Define $\xi \colon \mc X\to [0,+\infty]$ and $\xi^\infty\in [0,+\infty]$ as
\begin{equation}
 \label{e:xi} 
 \begin{split}
 \xi(x)=\inf_{\delta>0} \,
\sup \big\{c\ge 0 \,:\: \bar \mu(e^{c \tau} \un{B_\delta(x)})<+\infty  \big\}
\\
\xi^\infty:= \sup_{K \subset \mc X,\,K \text{compact}} \sup \big\{c\ge 0\,:\: \bar \mu(e^{c \tau} \un{K^c})<+\infty  \big\}
 \end{split}
\end{equation}
where $B_\delta(x) \subset \mc X$ is the ball of radius $\delta$
centered at $x$, see \eqref{e:xi2} for another characterisation of $\xi$. 
Note $\xi^\infty=+\infty$ if $\mc X$ is compact. 

The role of the auxiliary function $\xi$ and of the assumptions below are discussed at the end of this section. In particular it is remarked that \refa{2} below is implied by regularity assumptions on $\tau$ (e.g.\ upper semicontinuity at infinity). Hereafter \refa{1} and \refa{2} will \emph{always} be assumed, while our main results are proved whenever at least one of \refa{3} or \refa{4} holds (with somehow different statements in the two cases).
\begin{enumerate}[{(A}1{)}]
\item \label{a:1} $\bar \mu(\{\tau=0\})=\bar \mu(\{\tau=+\infty\})=0$.
\item \label{a:2} $\bar \mu(\{\xi <+\infty\})=0$.
\item \label{a:3} $\xi^\infty=+\infty$.
\item \label{a:4} $\mc X$ is locally compact.
\end{enumerate}

In the following $\mb x$ is sampled as an i.i.d.\ sequence with marginal law
$\bar \mu$ and $\mb E$ will denote the expectation of functions of
$\mb x$ with respect to $\bar \mu^{\otimes \bb N^+}$. By \refa{1}, for each $n \in \bb N$, $t\ge 0$ and a.e.\ $\mb x$, the following random variables are well defined
\begin{equation*}
\begin{split}
& S_0 \equiv S_0(\mb x) :=  0, \qquad S_{n} \equiv S_n(\mb x)  :=\sum_{i=1}^n \tau(x_i), \qquad n\ge 1,
\end{split}
\end{equation*}
\begin{equation*}
\begin{split}
  & \mc N_t \equiv \mc N_t(\mb x) :=\inf \{n\in \bb N\,:\: S_{n+1} \ge t \} =
  \sum_{n=1}^{+\infty} \un {(S_n \le t)},
\end{split}
\end{equation*}
\begin{equation*}
X_t \equiv X_t(\mb x):= x_{\mc N_t+1},
\end{equation*}
\begin{equation}
\label{e:pi}
\pi_t \equiv \pi_t(\mb x) = \frac 1t \int_{[0,t[}\!ds\, \delta_{X_s} \in \mc P(\mc X).
\end{equation}
In other words, $X_t=x_1$ for $t \in [0, \tau(x_1)[$, $X_t=x_2$ for
$t\in [\tau(x_1), \tau(x_1)+\tau(x_2)[$ and so on, while $\pi_t \colon \mc X^{\bb N^+} \to \mc P(\mc X)$
is the local time or the empirical measure of $X_t$. Let $\mb P_t:= \bar\mu^{\otimes \bb N^+}
\circ \pi_t^{-1}$ be the law of $\pi_t$.

From the ergodic theorem, one expects $\pi_t$ to concentrate on a deterministic limit as $t\to +\infty$ (this is easily established, for instance, whenever $\bar \mu(\tau)<+\infty$). Large deviations of $\mb P_t$ are then relevant, and subject of investigation of this paper.

\subsection{Some examples}
\label{ss:1.3}
Taking advantage of the general metric setting, one is able to fit in this framework also the case of a  process with random waiting time, see the examples \eqref{ii1:b} and \eqref{ii1:c} below.
\begin{enumerate}[\upshape(a)]
\item \label{ii1:a} If $\tau(x)\equiv 1$, then we are in the framework of the classical Sanov theorem, \cite[Chapter~6.2]{DZ}. Here $\xi(x)=\xi^\infty=+\infty$ for all $x\in \mc X$.

\item \label{ii1:b} Assume $\mc X=\mc Y \times [0,+\infty]$ for some Polish space $\mc Y$. Let $p$ be a Borel probability on $\mc Y$ and for $p$-a.e. $y$ let $\phi_y$ be a probability on $[0,+\infty]$ concentrated on $]0,+\infty[$, with $y\mapsto \phi_y$ measurable. Set $d\bar \mu((y,t))= d p(y)\,d\phi_y(t)$ and $\tau(y,t)=t$. Then we are in the framework of a pure jump process, jumping on $\mc Y$ with law $p$ and spending a random time at a visited point $y$ with law $\phi_y$. In this case
\begin{equation*}
\xi(y,t)=
\begin{cases}
\sup \{c\ge 0\,:\int \phi_y(ds) e^{c s} <+\infty\} & \text{if $t=+\infty$ and $y\in \mathrm{Supp}(\nu)$}
\\
+\infty & \text{otherwise.}
\end{cases}
\end{equation*}
\begin{equation*}
\xi^\infty=\sup_{K\subset \mc Y,\, K \text{compact}} \inf_{y\in K^c} \xi(y,+\infty)
\end{equation*}

\item \label{ii1:c} As a special case of \eqref{ii1:b}, take $\mc X:= [0,+\infty[ \times  [0,+\infty]$ and for $\bar\mu(d(r,s))=\nu(dr)\phi(ds)$, where $\nu$ is any probability measure on $]0,+\infty[$ and $\phi$ is the exponential law with mean 1. Set $\tau((y,s))=\theta(y) s$, so that, conditionally on $y$, $\tau$ is an exponential random variable with mean $\theta(y)$. In this setting, $\mc N_t$ is an inhomogeneous Poisson random process, and the empirical measure $\pi_t$ keeps track of the rates of the interarrival times. In this case  $\xi(y,t)=+\infty$ for $t<+\infty$ or $y \not \in \mathrm{Supp}(\nu)$, while $\xi(y,+\infty)=1/\theta(y)$ for $y\in \mathrm{Supp}(\nu)$, and $\xi^\infty=\varliminf_{y\to+\infty} \xi(y,+\infty)$.

\item \label{ii1:d} An interesting example in which $\tau$ is 'truly' deterministic is the following. $\mc X= ]0,+\infty[^n$, $\bar \mu(dx)=\prod_{i=1}^n \bar \mu_i(dx_i)$ for some probabilities $\bar \mu_i \in \mc P([0,+\infty[)$ and $\tau(x)= \tfrac 1n \sum_{i=1}^n \tfrac{1}{x_i}$. This is a model for a particle moving on $1$-dimensional torus of length $1$. During its motion the particle touches some fixed \emph{hot} points equi-spaced on the torus, and it changes its speed by sampling a new one with law $\bar \mu_i$ at the hot point $i$. $\tau(x)$ is then the time elapsed to complete a tour of the torus.

One can derive the large deviations of some physical quantities (e.g.\ kinetic energy of the particle) from the large deviations of the empirical measure of $X_t$. The physically relevant case is $\bar \mu_i(x_i)= x_i e^{-\beta_i x_i^2}dx_i$ for some $\beta_i>0$. Then $\xi^\infty=+\infty$ and $\xi(x)=+\infty$ unless one the $x_i$ is $0$, in which case $\xi(x)=0$. As remarked below, when $\{\xi=0\}$ is non-empty, the large deviations rate functional is not strictly convex. For $n=1$, this moving particle dynamics has been used as a building block of a toy model of out-of-equilibrium statistical mechanics in \cite{LMZ3}, where the absence of strict convexity of the rate causes a dynamic phase transition in the model.
\end{enumerate}

\subsection{Large deviations results}
\label{ss:1.4}
We recall the following standard definition.
\begin{definition}
Let $\mc Y$ be Polish space and $({\mb Q}_t)_{t>0}$ a family of Borel probability measures on
  $\mc Y$ and $I\colon \mc Y\to [0,+\infty]$. Then:
  \begin{itemize}
\item $I$ is \emph{good} if $\{y\in \mc Y\,:\:I(y)\le M\}$ is compact in $\mc Y$ for all $M>0$ and $I\not\equiv+\infty$.  

\item $({\mb Q}_t)_{t>0}$ satisfies a \emph{large deviations upper bound} with good rate $I$ if
\begin{equation*}
\varlimsup_{t \to +\infty} \frac{1}{t} \log {\mb Q}_t(\mc C) \le - \inf_{u \in \mc C} I(u) \qquad \text{for all $\mc C \subset \mc Y$ closed.}
\end{equation*}

\item $({\mb Q}_t)_{t>0}$ satisfies a \emph{large deviations lower bound} with good rate $I$, if
\begin{equation*}
 \varliminf_{t \to +\infty} \frac{1}{t}  \log {\mb Q}_t(\mc O)  \ge - \inf_{u \in \mc O}  I(u) \qquad \text{for all $\mc O\subset \mc Y$ open.}
\end{equation*}
\end{itemize}
$({\mb P}_t)_{t>0}$ is said to satisfy a \emph{good large deviations principle} if both the upper and lower bounds hold with the same good rate $I$.
\end{definition}

For $\nu \in \mc M_1(\mc X)$, let $\nu_a$ and $\nu_s$ be respectively
the absolutely continuous and singular parts of $\nu$ with respect to
$\bar \mu$. If $\nu$ is such that $ \nu(1/\tau) \in ]0,+\infty[$
define $\bar \nu \in \mc P(\mc X)$ as
\begin{equation}
 \label{e:nubar}
 \bar \nu(dx) = \frac{\frac{1}{\tau(x)} \nu(d x) }{ \nu(1/\tau)}.
\end{equation}

\begin{proposition}
 \label{p:Ilsc}
 Define $I \colon \mc P(\mc X)\to [0,+\infty]$ as
\begin{equation*}
I(\nu)=
\begin{cases}
\nu_a(1/\tau) \mb H(\bar{\nu}_a | \bar \mu) + \nu_s(\xi) 
        & \text{if $\nu_a(1/\tau) <+\infty$,}
\\
+\infty & \text{otherwise,}
\end{cases}
\end{equation*}
where we define $\nu_a(1/\tau) \mb H(\bar{\nu}_a|\bar \mu)=0$ whenever $\nu_a(1/\tau)=0$. If \refa{3} holds, then $I$ is a good and convex functional on $\mc P(\mc X)$.
\end{proposition}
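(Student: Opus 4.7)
The plan is to prove convexity directly from the structure of the two terms of $I$, and then to identify $I$ with a Legendre-type dual formula that provides both lower semicontinuity and tightness of sublevel sets. For \emph{convexity}: the Lebesgue decomposition $\nu\mapsto(\nu_a,\nu_s)$ relative to the fixed reference $\bar\mu$ is a linear operation on positive Radon measures, so the second term $\nu\mapsto\nu_s(\xi)=\int\xi\,d\nu_s$ is linear in $\nu$. For the first term, the perspective $\Phi(\mu,c):=c\,\mb H(\mu/c\,|\,\bar\mu)$ of the convex functional $\mb H(\cdot|\bar\mu)$ on $\mc P(\mc X)$ is jointly convex in $(\mu,c)$; since $\nu_a(1/\tau)\mb H(\bar\nu_a|\bar\mu)=\Phi\bigl((1/\tau)\nu_a,\,\nu_a(1/\tau)\bigr)$ is the composition of $\Phi$ with the linear map $\nu\mapsto\bigl((1/\tau)\nu_a,\nu_a(1/\tau)\bigr)$, it is convex in $\nu$. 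Summing yields convexity of $I$.

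For \emph{lower semicontinuity} I will prove the dual identity
\begin{equation*}
I(\nu)=\sup_{(\varphi,\lambda)\in\mc F}\bigl\{\nu(\varphi)-\lambda\bigr\}, \qquad \mc F:=\bigl\{(\varphi,\lambda)\in C_b(\mc X)\times\bb R\,:\,\varphi-\lambda\le\xi,\ \bar\mu(e^{\tau(\varphi-\lambda)})\le 1\bigr\}.
\end{equation*}
For the inequality ``$\ge$'', apply \eqref{e:relent1} to $\mb H(\bar\nu_a|\bar\mu)$ with test $\tau(\varphi-\lambda)$: admissibility makes $\log\bar\mu(e^{\tau(\varphi-\lambda)})\le 0$, and multiplying through by $\nu_a(1/\tau)$ gives $\nu_a(\varphi-\lambda)\le \nu_a(1/\tau)\mb H(\bar\nu_a|\bar\mu)$; the pointwise bound $\varphi-\lambda\le\xi$ gives $\nu_s(\varphi-\lambda)\le\nu_s(\xi)$; summing, using that $\nu$ is a probability, yields $\nu(\varphi)-\lambda\le I(\nu)$. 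For the reverse direction, given $\nu$ with $I(\nu)<+\infty$ and $\eps>0$, I construct an admissible pair close to the formal optimiser $\varphi-\lambda\sim(1/\tau)\log(d\bar\nu_a/d\bar\mu)$ on the absolutely continuous part and close to $\xi$ near $\mathrm{supp}(\nu_s)$; the latter is available because the defining formula for $\xi$ produces, for every $x$ and every $c<\xi(x)$, a ball $B_\delta(x)$ with $\bar\mu(e^{c\tau}\un{B_\delta(x)})<+\infty$, so continuous local bounds just below $\xi$ exist, and these are glued to the absolutely continuous piece via a partition of unity and truncation. Once the representation is secured, $I$ is a supremum of continuous affine functionals of $\nu$, and is therefore lower semicontinuous.

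For \emph{goodness} I use \refa{3}: for every $c>0$ there is a compact $K_c\subset\mc X$ with $\bar\mu(e^{c\tau}\un{K_c^c})<+\infty$, and plugging $\varphi_c\approx c\,\un{K_c^c}$ together with the admissibility-dictated $\lambda_c$ into the dual representation yields $c\,\nu(K_c^c)\le I(\nu)+\lambda_c$, uniformly on $\{I\le M\}$. Since $\lambda_c$ can be kept controlled by enlarging $K_c$ suitably, this gives uniform tightness; combined with lsc, Prokhorov's theorem delivers compact sublevel sets. Non-triviality of $I$ is seen by taking the normalised restriction of $\bar\mu$ to $\{\tau\ge 1\}$ (non-empty by \refa{1}): it has $\nu(1/\tau)\le 1$ and bounded density, so $\mb H(\bar\nu|\bar\mu)<+\infty$ and $I(\nu)<+\infty$. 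The main obstacle in this plan is the ``$\le$'' half of the dual identity: the pointwise optimal choices of $\varphi-\lambda$ are neither continuous nor bounded and must be simultaneously regularised inside $C_b(\mc X)$ while respecting both admissibility constraints; the role of the constraint $\varphi-\lambda\le\xi$ is precisely to allow the singular part of $\nu$ to be probed by test functions that are otherwise blind to the Lebesgue decomposition.
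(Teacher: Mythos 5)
Your overall strategy is the same as the paper's: express $I$ through a variational (Legendre-type) formula and read off lower semicontinuity, and deduce the remaining properties from it. Indeed the paper proves (Proposition~\ref{p:Ibound}) the identity
$I(\nu)=\sup\{\nu(f)\,:\,f\in LSC(\mc X),\ \bar\mu(e^{\tau f})\le 1\}$,
which is equivalent to your $C_b\times\bb R$ version: any admissible lsc $f$ is an increasing limit of admissible continuous $f_n$, so both suprema coincide. Two of your choices genuinely diverge from the paper and are valid and arguably cleaner for this proposition: \emph{(i)} you get convexity by composing the jointly convex perspective $\Phi(\mu,c)=c\,\mb H(\mu/c\,|\,\bar\mu)$ with the linear maps $\nu\mapsto\big((1/\tau)\nu_a,\nu_a(1/\tau)\big)$ and $\nu\mapsto\nu_s$, rather than reading convexity off the dual formula; \emph{(ii)} you get goodness from a direct tightness estimate using~\refa{3} and the dual inequality, whereas the paper first compactifies $\mc X$ (Proposition~\ref{p:embedding}) and then uses compactness of $\mc P(\mc X)$.

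The serious concern is the ``$\le$'' half of the dual identity, which you rightly call the main obstacle but leave as a sketch. The paper's proof of this inequality is not a straightforward regularisation: it constructs Borel test functions of the form~\eqref{e:f}, defined separately on a Borel set $A$ carrying all the $\bar\mu$- and $\nu_a$-mass and on $A^c$ carrying all the $\nu_s$-mass; the existence of such an $A$ is exactly where assumption~\refa{2} enters (one needs $\bar\mu(\{\xi<+\infty\})=0$ together with the mutual singularity of $\nu_s$ and $\bar\mu$). Your ``partition of unity and truncation'' gluing does not address this and never invokes~\refa{2}: since $\{\xi<+\infty\}$ may be topologically dense in $\mathrm{Supp}(\bar\mu)$, a continuous gluing has no room to push $\psi$ up towards $\xi$ near $\mathrm{Supp}(\nu_s)$ without simultaneously blowing up $\bar\mu(e^{\tau\psi})$; the rescue is to do the gluing at the Borel level (using $A$) and only then approximate in $L_1(\nu+\bar\mu)$ by continuous functions, preserving both constraints via the uniform integrability estimate~\eqref{e:xiup}. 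As written, your construction would not go through; once you incorporate~\refa{2} and the $A$/$A^c$ split, the rest of your plan (including the tightness argument and the perspective-function convexity) is sound.
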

\begin{theorem}
 \label{t:ld1}
If \refa{3} holds, then $(\mb P_t)_{t>0}$ satisfies a good large deviations principle on $\mc P(\mc X)$ with rate $I$.
\end{theorem}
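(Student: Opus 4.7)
The plan is to obtain Theorem~\ref{t:ld1} in three steps: exponential tightness of $(\mb P_t)$ in $\mc P(\mc X)$, an upper bound via the moment generating function, and a matching lower bound constructed explicitly. A preliminary observation is that, combining the local exponential integrability from \refa{2} with the far-field control from \refa{3}, a compactness-plus-covering argument yields $\bar\mu(e^{c\tau}) < +\infty$ for every $c \ge 0$. Since $\pi_t(K^c) \le t^{-1}\sum_{i=1}^{\mc N_t+1} \tau(x_i)\un{x_i\in K^c}$, exponential Chebyshev applied to the right-hand side then produces, for each $M > 0$, a compact $K_M \subset \mc P(\mc X)$ such that $\mb P_t(\pi_t\notin K_M)\le e^{-tM}$ for all $t$ large enough.

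\textbf{Upper bound.} For $\varphi\in C_b(\mc X)$, set $F_t(\varphi):=\mb E\big[\exp\!\big(\int_0^t \varphi(X_s)\,ds\big)\big]$. Conditioning on $x_1$ and $\tau(x_1)$ produces the renewal equation
\begin{equation*}
F_t(\varphi) = \bar\mu\big(e^{t\varphi}\un{\tau>t}\big) + \int \bar\mu(dx)\,\un{\tau(x)\le t}\,e^{\tau(x)\varphi(x)}\, F_{t-\tau(x)}(\varphi),
\end{equation*}
whose Laplace transform in $t$ reads $\tilde F(\lambda,\varphi) = A(\lambda,\varphi)/(1 - \bar\mu(e^{\tau(\varphi-\lambda)}))$ for an explicit finite $A$. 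By the preliminary bound, $\lambda\mapsto \bar\mu(e^{\tau(\varphi-\lambda)})$ is finite, continuous and strictly decreasing from $+\infty$ to $0$, so it crosses $1$ at a unique $\Lambda(\varphi)\in\bb R$; a Tauberian extraction yields $\lim_t t^{-1}\log F_t(\varphi) = \Lambda(\varphi)$. The variational formula for relative entropy implies $\bar\nu(\tau\varphi) - \mb H(\bar\nu|\bar\mu) \le \Lambda(\varphi)\bar\nu(\tau)$ for every $\bar\nu\in\mc P(\mc X)$, with equality attained; rewriting $\nu = \tau\bar\nu/\bar\nu(\tau)$ identifies $\Lambda$ with the Legendre conjugate of $I$ on absolutely continuous measures, and the extension to general $\nu$ follows by bounding singular contributions through the defining inequality of $\xi$. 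Combined with exponential tightness, Bryc's lemma yields the upper bound.

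\textbf{Lower bound.} Fix $\nu$ with $I(\nu)<+\infty$ and decompose $\nu=\nu_a+\nu_s$, setting $a:=\nu_a(1/\tau)$. The classical Sanov theorem applied to $L_n:=n^{-1}\sum_{i=1}^n\delta_{x_i}$ shows that the event $\{L_{n_t}\approx \bar\nu_a\}$, with $n_t:=\lfloor ta\rfloor$, has probability at least $\exp\!\big(-t\,\nu_a(1/\tau)\mb H(\bar\nu_a|\bar\mu) - o(t)\big)$; on this event, the law of large numbers gives $S_{n_t}/t\to 1$ and $t^{-1}\sum_{i=1}^{n_t}\tau(x_i)\delta_{x_i}$ close to $\nu_a$ narrowly. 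For the singular part, approximate $\nu_s$ narrowly by a finite atomic $\sum_k c_k\delta_{y_k}$ and demand that a bounded number of additional samples $x_{n_t+1},\ldots,x_{n_t+K}$ fall in small balls $B_\delta(y_k)$ with $\tau(x_{n_t+k})/t\approx c_k$; by the definition of $\xi$, this event has probability at least $\exp(-t\sum_k c_k\xi(y_k) - o(t))$. Independence of the samples combines the two costs additively, and a density argument removes the atomic approximation, producing a lower bound with rate $I(\nu)$.

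\textbf{Main obstacle.} The hardest point is the singular part of the lower bound. One has to simultaneously pin $\tau(x_{n_t+k})$ in a narrow window $(c_k t, (c_k+\eps)t)$ and $x_{n_t+k}$ in a small ball around $y_k$ in order to extract precisely the rate $\xi(y_k)$, without disturbing the Sanov construction governing the absolutely continuous part; and to push the approximation through a two-layer limit (atomic $\to$ general $\nu_s$), for which the lower semicontinuity of $\nu\mapsto\nu_s(\xi)$ furnished by Proposition~\ref{p:Ilsc} is essential.
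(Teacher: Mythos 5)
Your preliminary observation is false, and this is fatal for the upper bound as you set it up. Assumption \refa{2} asserts $\xi=+\infty$ only $\bar\mu$-almost everywhere; it does not exclude $\bar\mu$-null points $x$ with $\xi(x)<+\infty$, around which $\tau$ has no local exponential moments. In example (b) of Section~1.3, for instance, every $(y,+\infty)$ with $y\in\mathrm{Supp}(p)$ satisfies $\xi(y,+\infty)<+\infty$, and each of its neighbourhoods has positive $\bar\mu$-measure with $\bar\mu(e^{c\tau}\un{B_\delta(y,+\infty)})=+\infty$ for $c$ large; a compactness-plus-covering argument cannot get past such points. Indeed the heavy-tail regime $\bar\mu(e^{c\tau})=+\infty$ for all $c>0$ is the main case of interest, and it is exactly when the singular term $\nu_s(\xi)$ in $I$ is nontrivial. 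With the preliminary bound gone, your exponential tightness claim collapses, the denominator $\lambda\mapsto\bar\mu(e^{\tau(\varphi-\lambda)})$ in the renewal equation is no longer finite and continuous across the critical value (it can jump from $+\infty$ to a value $<1$), and the Tauberian extraction of $\lim_t t^{-1}\log F_t(\varphi)$ is unjustified — the existence of that limit is precisely what is delicate here.

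The paper circumvents all of this: it first reduces to compact $\mc X$ via compactification (Proposition~\ref{p:embedding}), which makes exponential tightness free, and then it proves the upper bound not through an exact Laplace-transform asymptotic but through the much cruder estimate $\sup_{t\ge1}t^{-1}\mb E\exp[t\pi_t(f)]<+\infty$ for any $f\in LSC(\mc X)$ with $\bar\mu(e^{\tau f})\le1$ (Lemma~\ref{l:freee}, which exploits the renewal structure directly), combined with the minimax lemma and the variational identity $I=\tilde I$ of Proposition~\ref{p:Ibound}. Your lower bound for the singular part also has a gap: Lemma~\ref{l:xidown} characterises $\xi^A$ through a $\varlimsup$, so the estimate $\bar\mu(\{\tau\ge L\}\cap B_\delta(y_k))\ge e^{-(\xi^{B_\delta(y_k)}+\eps)L}$ holds only along a subsequence of $L\to\infty$, and you cannot guarantee that the required thresholds $L\approx c_k t$ can be met simultaneously for several $k$ along a common sequence of $t$'s. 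The paper avoids manipulating singular measures directly by proving (Proposition~\ref{p:jlsce}) that $I$ is the lower semicontinuous envelope of its restriction $J$ to absolutely continuous measures; by Remark~\ref{r:ldgamma} the lower bound then only needs to be established for $\nu\ll\bar\mu$, and this is done cleanly through the tilted law $\mb Q_t=\bar\nu^{\otimes\bb N^+}\circ\pi_t^{-1}$, the entropy estimate of Lemma~\ref{l:relent}, and the elementary renewal theorem.
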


In the following remark some features of the functional $I$ are investigated. In particular we characterise the cases where $I$ is strictly convex and those in which it features affine stretches.
\begin{remark}
\label{r:conv}
Assume \refa{3}. Since $\xi(x)=+\infty$ for $x\not \in \mathrm{Supp}(\bar \mu)$,
 $I(\nu)=+\infty$ if $\mathrm{Supp}(\nu) \not \subset \mathrm{Supp}(\bar\mu)$. However, contrary to classical Sanov theorem, in general $I(\nu)<+\infty$ does not imply that $\nu$ is absolutely continuous with respect to $\bar \mu$, unless $\xi \equiv \infty$. In general, the nature of $I(\nu)$ depends on the values of $\xi$ and $\bar\mu(\tau)$. Indeed let 
\begin{equation*}
E:=\{x\in \mc X\,:\:\xi(x)=0\}
\end{equation*}
be the set of points around which $\tau$ has no local exponential moments. Then
 \begin{itemize}
\item[(1)] If $E=\emptyset$, namely if $\xi(x)>0$ for all $x\in \mc X$, then a fortiori $\bar \mu(\tau)<+
\infty$ and $I(\nu)=0$ iff $\nu=\mu$, where (consistently with \eqref{e:nubar})
\begin{equation}
\label{e:mu}
\mu(dx):=
  \frac{\tau(x) \bar \mu(dx)}{\bar \mu(\tau)}.
\end{equation}

\item[(2)] If $E \neq \emptyset$, there are two possibilities
  \begin{itemize}
      \item[(2A)] If $\bar \mu(\tau)<+\infty$, then $I(\nu)=0$ iff $\nu=\alpha \mu +(1-\alpha) \lambda$ for some $
\alpha \in [0,1]$ and some $\lambda \in \mc P(X)$ such that $\lambda(E)=1$, where $\mu$ is given by \eqref{e:mu}.

      \item[(2B)]If $\bar \mu(\tau)=+\infty$ then $I(\nu)=0$ iff $\nu$ is concentrated on $E$.
   \end{itemize}
 \end{itemize}
In particular, Theorem~\ref{t:ld1} implies the convergence in law of $\pi_t$ to $\mu$ in case (1), and in case (2B) 
if $E$ is a singleton. In all other cases, a nontrivial second order large deviations may hold, see \cite{tsi} where moderate deviations are discussed in a particular case. Finally, if $E \neq \emptyset$, then the subdifferential of $I$ is nontrivial.
\end{remark}

If $\xi^\infty<+\infty$, $({\mb P}_t)_{t>0}$ is not exponentially tight on
$\mc P(\mc X)$, and large deviations need to be
investigated on $\mc M_1(\mc X)$. However, in this case we need $\mc X$ to be locally compact in order to have good topological properties of $\mc M_1(\mc X)$.

\begin{proposition}
 \label{p:Iprime}
Define $I' \colon \mc M_1(\mc X)\to [0,+\infty]$ as
\begin{equation*}
I'(\nu)=
\begin{cases}
\nu_a(1/\tau) \mb H(\bar{\nu}_a|\bar \mu) + \nu_s(\xi) 
  + (1-\nu(\mc X))\xi^\infty
& \text{if $\nu_a(1/\tau) <+\infty$,}
\\
+\infty & \text{otherwise,}
\end{cases}
\end{equation*}
If \refa{4} holds, then $I'$ is a good and convex functional on $\mc M_1(\mc X)$.
\end{proposition}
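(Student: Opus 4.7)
My plan is to reduce Proposition~\ref{p:Iprime} to Proposition~\ref{p:Ilsc} via the one-point compactification. Since $\mc X$ is locally compact Polish by \refa{4}, its one-point compactification $\hat{\mc X}:=\mc X\cup\{\infty\}$ is compact Polish. I would extend the data by declaring $\bar\mu(\{\infty\})=0$ and $\tau(\infty):=+\infty$. The first task is to verify that \refa{1}, \refa{2} and \refa{3} hold for the extended system on $\hat{\mc X}$: the first two transfer automatically because $\bar\mu$ does not charge $\infty$, whereas \refa{3} holds trivially since $\hat{\mc X}$ is itself compact. A short computation from the definition \eqref{e:xi} shows that $\xi_{\hat{\mc X}}=\xi$ on $\mc X$ (small balls around points of $\mc X$ coincide in the two spaces), while $\xi_{\hat{\mc X}}(\infty)=\xi^\infty$ (a neighborhood basis of $\infty$ in $\hat{\mc X}$ is given by complements of compact subsets of $\mc X$).

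Next, I would identify $(\mc M_1(\mc X),\text{vague})$ with $(\mc P(\hat{\mc X}),\text{narrow})$ via the map $\Phi(\nu):=\nu+(1-\nu(\mc X))\,\delta_\infty$. Continuity of $\Phi$ uses the decomposition $f=f(\infty)+[f-f(\infty)]$ for $f\in C(\hat{\mc X})$, with $f-f(\infty)\in C_0(\mc X)$, together with the fact that since total masses in $\mc M_1(\mc X)$ are bounded by $1$, vague convergence extends to convergence against $C_0(\mc X)$-functions. Continuity of the inverse follows because any $g\in C_c(\mc X)$ extends continuously to $\hat{\mc X}$ by setting $g(\infty):=0$. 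Because $\bar\mu(\{\infty\})=0$, all mass at $\infty$ is singular with respect to $\bar\mu$, so $\Phi(\nu)_a=\nu_a$ and $\Phi(\nu)_s=\nu_s+(1-\nu(\mc X))\,\delta_\infty$. Combining this with the identity $\xi_{\hat{\mc X}}(\infty)=\xi^\infty$ established above, a direct calculation yields $I\circ\Phi=I'$, where $I$ is the functional of Proposition~\ref{p:Ilsc} constructed on $\hat{\mc X}$.

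Applying Proposition~\ref{p:Ilsc} to the extended data then gives that $I$ is a good, convex functional on $\mc P(\hat{\mc X})$, and transporting via the homeomorphism $\Phi$ transfers goodness and convexity to $I'$ on $\mc M_1(\mc X)$. The main obstacle I anticipate is not in the entropic or large-deviations machinery (entirely absorbed into Proposition~\ref{p:Ilsc}), but in the bookkeeping around the added point: one must check carefully that the function $\xi$ and the constant $\xi^\infty$ glue together correctly at $\infty$, that the Lebesgue decomposition with respect to the extended $\bar\mu$ behaves as claimed, and that $\Phi$ is truly a bicontinuous bijection. All of this is routine but requires attention at $\infty$, especially in the passage from vague convergence on $\mc X$ to narrow convergence on $\hat{\mc X}$.
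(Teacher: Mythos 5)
Your proof is correct, and it takes a genuinely (if mildly) different route from the paper. The paper (in Proposition~\ref{p:embedding}) compactifies by embedding $\mc X$ into $[0,1]^{\bb N}$ and taking the closure $\mc Y$, which in general adds a whole closed set $\mc Y\setminus\mc X$ of ``points at infinity''; it then uses the restriction map $\Pi'\colon\mc P(\mc Y)\to\mc M_1(\mc X)$, which is continuous but not injective, and invokes the contraction principle. For that argument to give precisely the coefficient $\xi^\infty$ in front of the mass defect $1-\nu(\mc X)$, one must check that $\inf_{y\in\mc Y\setminus\mc X}\xi_{\mc Y}(y)=\xi^\infty$ (a compactness argument: each $y\in\mc Y\setminus\mc X$ satisfies $\xi_{\mc Y}(y)\ge\xi^\infty$ since $\mc Y\setminus K$ is a neighbourhood of $y$ for every compact $K\subset\mc X$, and a finite-subcover argument on the compact set $\mc Y\setminus\mc X$ gives the reverse inequality); the paper leaves this computation implicit. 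Your one-point compactification $\hat{\mc X}$ bypasses this entirely: because $\hat{\mc X}\setminus\mc X$ is a single point with $\xi_{\hat{\mc X}}(\infty)=\xi^\infty$ on the nose, your $\Phi$ is an \emph{affine homeomorphism}, so goodness and convexity transfer directly from Proposition~\ref{p:Ilsc} on the compact space $\hat{\mc X}$ with no infimum over preimages. The trade-off is that the paper's construction handles the \refa{3} and \refa{4} cases uniformly with one compactification, while yours is specific to the locally compact case; but since Proposition~\ref{p:Iprime} is stated only under \refa{4}, your reduction is perfectly adequate, and arguably cleaner here. The bookkeeping you flag as the main obstacle---that $\xi_{\hat{\mc X}}$ restricts to $\xi$ on $\mc X$ (a topological identity, using that $\mc X$ is open in $\hat{\mc X}$ so small balls in the two spaces agree), that $\xi_{\hat{\mc X}}(\infty)=\xi^\infty$ (neighbourhoods of $\infty$ are exactly complements of compacta), and that the Lebesgue decomposition pushes the mass at $\infty$ into the singular part---is all correct as you describe, and the metrizability of $(\mc M_1(\mc X),\text{vague})$ under \refa{4} (a compact Polish space, as the paper records) justifies the sequential continuity arguments.
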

\begin{theorem}
 \label{t:ld2}
 If \refa{4} holds, then $(\mb P_t)_{t>0}$ satisfies a good large deviations principle on $\mc M_1(\mc X)$ with rate $I'$.
\end{theorem}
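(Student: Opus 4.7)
The plan is to reduce Theorem~\ref{t:ld2} to Theorem~\ref{t:ld1} applied on the one-point compactification $\mc X^*:=\mc X\cup\{\infty\}$, which is a compact metrisable space since $\mc X$ is locally compact Polish. I would extend the data by letting $\bar\mu^*$ be the pushforward of $\bar\mu$ by the inclusion $\mc X\hookrightarrow\mc X^*$ (so $\bar\mu^*(\{\infty\})=0$) and setting $\tau^*(x):=\tau(x)$ for $x\in\mc X$, $\tau^*(\infty):=1$; denote by $\xi^*$, $\xi^{*,\infty}$, $I^*$ the analogues of $\xi$, $\xi^\infty$ and of the functional in Proposition~\ref{p:Ilsc} for the triple $(\mc X^*,\bar\mu^*,\tau^*)$. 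Since $\bar\mu^{\otimes\bb N^+}$-almost surely the sequence $(x_i)$ never visits $\infty$, the empirical measure $\pi_t$ is unchanged when built on $\mc X^*$, so its law $\mb P_t^*$ on $\mc P(\mc X^*)$ equals $\mb P_t\circ j^{-1}$, where $j\colon \mc M_1(\mc X)\to \mc P(\mc X^*)$ is defined by $j(\nu):=\nu+(1-\nu(\mc X))\delta_\infty$.

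I would then observe that $j$ is a homeomorphism once $\mc M_1(\mc X)$ carries the vague topology and $\mc P(\mc X^*)$ the narrow topology: every $g\in C(\mc X^*)$ decomposes as $g(\infty)+(g-g(\infty))$ with $g-g(\infty)\in C_0(\mc X)$, and the $\sigma$-compactness of $\mc X$ yields that vague convergence in $\mc M_1(\mc X)$ extends to convergence against $C_0(\mc X)$ by a standard cut-off argument. Next I would check that \refa{1}, \refa{2} and \refa{3} all hold on $(\mc X^*,\bar\mu^*,\tau^*)$: \refa{1} and \refa{2} reduce to the original ones, because $\xi^*$ agrees with $\xi$ on $\mc X$ (balls of $\mc X^*$ around a point of $\mc X$ are eventually contained in $\mc X$) and $\bar\mu^*(\{\infty\})=0$; \refa{3} is automatic since compactness of $\mc X^*$ forces $\xi^{*,\infty}=+\infty$. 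Theorem~\ref{t:ld1} then furnishes a good LDP on $\mc P(\mc X^*)$ for $(\mb P_t^*)_{t>0}$ with rate $I^*$, and transport through $j$ yields a good LDP on $\mc M_1(\mc X)$ for $(\mb P_t)_{t>0}$ with rate $I^*\circ j$.

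It only remains to identify $I^*\circ j$ with $I'$. Since $\bar\mu^*$ is concentrated on $\mc X$, the Lebesgue decomposition of $j(\nu)$ against $\bar\mu^*$ has absolutely continuous part $\nu_a$ (regarded as a measure on $\mc X^*$) and singular part $\nu_s+(1-\nu(\mc X))\delta_\infty$; hence $(j(\nu))_a(1/\tau^*)=\nu_a(1/\tau)$, the reduced probabilities $\bar{j(\nu)}_a$ and $\bar\nu_a$ coincide, and the required identity reduces to $\xi^*(\infty)=\xi^\infty$. This last equality is the delicate step and the main obstacle: a neighbourhood base at $\infty\in\mc X^*$ is provided by the complements of a compact exhaustion $(K_n)$ of $\mc X$ (available by local compactness and $\sigma$-compactness), so both $\xi^*(\infty)$---via the $\inf_{\delta>0}$ in \eqref{e:xi}---and $\xi^\infty$---via the $\sup_K$---become the monotone limit of $\sup\{c\ge 0:\bar\mu(e^{c\tau}\un{K_n^c})<+\infty\}$ as $n\to\infty$ and thus coincide. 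With this in hand, $I^*(j(\nu))=I'(\nu)$ for every $\nu\in\mc M_1(\mc X)$, completing the argument.
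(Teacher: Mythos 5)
Your proof is correct and follows the same overall strategy as the paper---reduce to the compact case by a compactification, apply Theorem~\ref{t:ld1} there, then transfer the principle back---but the specific route is different in a way worth noting. The paper (Proposition~\ref{p:embedding}) embeds $\mc X$ in $[0,1]^{\bb N}$, takes the closure $\mc Y$ there, extends $\tau$ by $+\infty$ on $\mc Y\setminus \mc X$, and then transfers via the \emph{surjection} $\Pi'\colon \mc P(\mc Y)\to \mc M_1(\mc X)$, $\Pi'(\nu)(f)=\nu(f)$ for $f\in C_c(\mc X)$, invoking the contraction principle. That map has nontrivial fibers, and identifying the contracted rate with $I'$ requires checking that $\inf\{\lambda(\xi_{\mc Y}) : \lambda\in\mc M_1(\mc Y\setminus\mc X),\,\lambda(\mc Y)=1-\nu(\mc X)\}=(1-\nu(\mc X))\xi^\infty$, a computation the paper leaves implicit. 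Your one-point compactification $\mc X^*=\mc X\cup\{\infty\}$ (legitimate: a locally compact Polish space is second countable, hence $\sigma$-compact, so $\mc X^*$ is compact metrisable) instead produces a \emph{homeomorphism} $j\colon\mc M_1(\mc X)\to\mc P(\mc X^*)$, so the transfer is just a change of variables and $I^*\circ j=I'$ is a pointwise identity, which you verify by checking $\xi^*=\xi$ on $\mc X$, $\xi^*(\infty)=\xi^\infty$ (using a compact exhaustion of $\mc X$ as a neighbourhood base at $\infty$) and tracing the Lebesgue decomposition through $j$. This is a cleaner and more explicitly justified version of the argument, at the cost of being tailored to the locally compact case, which is precisely what \refa{4} assumes. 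One minor remark: your choice $\tau^*(\infty):=1$ is harmless since $\bar\mu^*(\{\infty\})=0$, so it affects neither \refa{1}--\refa{3} on $\mc X^*$ nor $\xi^*$; the paper sets the extension of $\tau$ to $+\infty$ on the boundary of the compactification, which would work equally well.
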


%
Under \refa{1}, the key assumption \refa{2} is satisfied whenever
\begin{equation*}
\bar \mu
\big(\cap_{M>0}\mathrm{Closure}(\{\tau \ge M\})\big)=0.
\end{equation*}
In particular \refa{2} holds if $\tau$ is upper
semicontinuous at infinity. Since all the results stated above make sense even dropping \refa{2},
one may wonder whether it is a merely technical condition. While one can prove the large deviations upper bound even dropping this assumption, the lower bound is in general false if \refa{2} does not hold.

\subsection{Outlook}
With the same notation as above, one may also introduce the Markov process $Y_t=(X_t, \frac{t-\sum_{i=1}^{\mc 
N_t}\tau(x_i)}{\tau(x_{\mc N_t+1})}) \in \mc X \times [0,1[$. Large deviations for the empirical measure of $Y_t$ would give large deviations of $X_t$ by a standard contraction argument. Moreover, the Donsker-Varadhan theory \cite{DV} and its extensions provide general large deviations results for the empirical measure of a Markov process. However, this approach fails in this case. On the one hand, standard Donsker-Varadhan theorems cannot be applied here, since $Y_t$ only enjoys weak ergodic properties. On the other hand, even formally, the 
Donsker-Varadhan rate functional does not provide the right answer, a feature already remarked in \cite{LMZ1} for renewal processes. Indeed, it has been proved in \cite{LMZ2} that in general the empirical measure of  $Y_t$ \emph{does not satisfy} a large deviations principle, and in the special case it does (which depends on the law of $\tau$ under $\bar \mu$), the rate functional does not correspond to the Donsker-Varadhan functional. Similarly, the large deviations rate functional for $\pi_t$ does not correspond in general to the one predicted by applying contraction to the Donsker-Varadhan functional for the empirical measure of $Y_t$ (unless $\tau$ has all exponential moments bounded). In this respect, it may be remarkable that the law of $\pi_t$ satisfies a large deviations principle at all.

\section{The functional $I$}
\label{s:2}
This section is devoted to prove Proposition~\ref{p:Ilsc}, Proposition~\ref{p:Iprime} and general properties of the functional $I$, which will play a key role in the proof of the main theorems. First we remark that one can reduce to the case of a compact state space $\mc X$.

\begin{proposition}
\label{p:embedding}
Suppose that Proposition~\ref{p:Ilsc} and Theorem~\ref{t:ld1} hold with the additional hypotheses of $\mc X$ being a \emph{compact} Polish space. Then Proposition~\ref{p:Ilsc}, Theorem~\ref{t:ld1}, Proposition~\ref{p:Iprime} and Theorem~\ref{t:ld2} hold.
\end{proposition}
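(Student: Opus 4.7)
The strategy is to reduce to a compact state space via two different compactifications. Under (A3) we use a metric compactification of $\mc X$, and under (A4) we use the one-point (Alexandroff) compactification. In both cases the compactified space is compact Polish, so the hypothesized versions of Proposition~\ref{p:Ilsc} and Theorem~\ref{t:ld1} apply there, and one then transports the result back.

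For Proposition~\ref{p:Ilsc} and Theorem~\ref{t:ld1} under (A3), I fix a totally bounded metric on $\mc X$ compatible with its topology and let $\hat{\mc X}$ be the corresponding metric completion; $\hat{\mc X}$ is compact Polish, and $\mc X$ sits inside as a dense $G_\delta$. Extend $\bar\mu$ by zero on $\hat{\mc X}\setminus\mc X$ and extend $\tau$ arbitrarily (the extension is $\bar\mu$-irrelevant). For $x\in\mc X$, small balls in $\hat{\mc X}$ meet $\mc X$ exactly in the original balls, so the new auxiliary function $\hat\xi$ satisfies $\hat\xi(x)=\xi(x)$, and (A1)–(A2) transfer to $\hat{\mc X}$. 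For $x\in\hat{\mc X}\setminus\mc X$, any compact $K\subset\mc X$ is closed in $\hat{\mc X}$ and avoids $x$, so $B_\delta(x)\cap\mc X\subset K^c$ for $\delta$ small; this yields $\hat\xi(x)\ge\xi^\infty=+\infty$. Applying the compact version of Proposition~\ref{p:Ilsc} and Theorem~\ref{t:ld1} on $\hat{\mc X}$ produces a good convex rate $\hat I$ whose level sets are concentrated on measures with $\nu(\mc X)=1$. Identifying such measures with $\mc P(\mc X)$ (narrow convergence on $\mc P(\hat{\mc X})$ restricts to narrow convergence on $\mc P(\mc X)$ once the limit gives full mass to $\mc X$, thanks to the $G_\delta$ structure), one recovers $I=\hat I|_{\mc P(\mc X)}$ and the LDP for $(\mb P_t)$ on $\mc P(\mc X)$, since $\mb P_t$ is supported there.

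For Proposition~\ref{p:Iprime} and Theorem~\ref{t:ld2} under (A4), let $\mc X^*=\mc X\cup\{\infty\}$ be the one-point compactification; it is compact Polish. Extend $\bar\mu$ trivially and set $\tau(\infty)=+\infty$. The conditions (A1) and (A2) pass to $\mc X^*$, while (A3) is automatic by compactness. Since the neighborhoods of $\infty$ in $\mc X^*$ are exactly the complements of compacts of $\mc X$, the new auxiliary function $\xi^*$ satisfies $\xi^*(x)=\xi(x)$ on $\mc X$ and $\xi^*(\infty)=\xi^\infty$. The map
\begin{equation*}
\Phi\colon\mc M_1(\mc X)\to\mc P(\mc X^*),\qquad \Phi(\nu)=\nu+(1-\nu(\mc X))\,\delta_\infty
\end{equation*}
is a homeomorphism from the vague topology on $\mc M_1(\mc X)$ to the narrow topology on $\mc P(\mc X^*)$, because every $g\in C(\mc X^*)$ decomposes as a constant plus an element of $C_0(\mc X)$, and vague convergence against $C_c(\mc X)$ extends to $C_0(\mc X)$ under the uniform mass bound $\nu(\mc X)\le 1$. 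The compact version of Theorem~\ref{t:ld1} on $\mc X^*$ thus contracts through $\Phi^{-1}$ to an LDP for $(\mb P_t)$ on $\mc M_1(\mc X)$. Decomposing $\Phi(\nu)$ with respect to $\bar\mu$, its singular part is $\nu_s+(1-\nu(\mc X))\delta_\infty$, which integrates $\xi^*$ to $\nu_s(\xi)+(1-\nu(\mc X))\xi^\infty$, reproducing exactly the formula of Proposition~\ref{p:Iprime}.

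The main obstacle is the careful verification that the topologies match under these embeddings: that narrow convergence on $\mc P(\hat{\mc X})$ restricted to measures fully concentrated on $\mc X$ coincides with narrow convergence on $\mc P(\mc X)$, and that $\Phi$ above is indeed a homeomorphism together with the correct identification of the Lebesgue decomposition. Both are standard but require some measure-theoretic care; once they are in place, the transfer of the good convex rate and of the upper and lower large deviations bounds is mechanical.
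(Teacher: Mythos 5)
Your overall strategy (compactify the state space, invoke the compact-case results, transport the LDP back) is the same as the paper's, which embeds $\mc X$ into $[0,1]^{\bb N}$ and takes the closure $\mc Y$ there, so the two arguments are close in spirit; but the implementations differ in a way worth noting. The paper uses a \emph{single} compactification $\mc Y$ for both cases: under \refa{3} it passes through the map $\Pi(\nu)=\nu(\cdot\,|\mc X)$, and under \refa{4} it uses the restriction map $\Pi'$ to $\mc M_1(\mc X)$ and invokes the contraction principle. You instead tailor the compactification to the case: the metric completion $\hat{\mc X}$ under \refa{3} and the Alexandroff one-point compactification $\mc X^*$ under \refa{4}. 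Both are legitimate. Under \refa{3} the two constructions are essentially the same (the closure in the cube is a metric completion), and your direct identification $\mc P(\mc X)\cong\{\nu\in\mc P(\hat{\mc X}):\nu(\mc X)=1\}$ together with the topological check via the portmanteau characterization correctly transfers the LDP, because the effective domain of $\hat I$ lies in this subspace. Under \refa{4} your route is actually a bit cleaner than the paper's: the paper's map $\Pi'$ is not injective when $\mc Y\setminus\mc X$ has more than one point, so the contracted rate is an infimum over boundary mass distributions, and one must verify that $\inf_{y\in\mc Y\setminus\mc X}\xi_{\mc Y}(y)=\xi^\infty$ to recover the stated $I'$ (a fact the paper leaves implicit). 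By using $\mc X^*$, there is a single point at infinity with $\xi^*(\infty)=\xi^\infty$ exactly, your map $\Phi$ is a homeomorphism $\mc M_1(\mc X)\to\mc P(\mc X^*)$, and the transferred rate is $I^*\circ\Phi$ with no optimization step. Two small points to make explicit: (i) even though the extension of $\tau$ off $\mc X$ is $\bar\mu$-null and hence does not affect $\hat\xi$, it is cleanest to set $\tau\equiv+\infty$ there as the paper does, so that \refa{1} and the formula for $1/\tau$ in the rate are unambiguous; (ii) the claim that narrow convergence in $\mc P(\hat{\mc X})$ restricted to measures carried by $\mc X$ coincides with narrow convergence in $\mc P(\mc X)$ follows directly from the portmanteau criterion on open sets and does not really need the $G_\delta$ structure you invoke, so your appeal to it is harmless but not the load-bearing step.
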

\begin{proof}
An arbitrary Polish space $\mc X$ embeds continuously in the compact Polish space $[0,1]^{\bb N}$, see \cite[Lemma 3.1.2]{S1}. Regard $\mc X$ as a subset of $[0,1]^{\bb N}$ and let $\mc Y$ be the closure of $\mc X$. Then $\mc Y$ is compact. Extend $\bar \mu$ to $\mc Y$ setting $\bar \mu (\mc Y\setminus \mc X)=0$ and extend $\tau$ to $\mc Y$ setting $\tau(x)=+\infty$ for $x\in \mc Y \setminus \mc X$. We denote $\xi_{\mc Y}$ and $I_{\mc Y}$ the object corresponding to $\xi$ and $I$ on $\mc Y$. Then \refa{1}, \refa{2} hold on $\mc Y$ since they hold on $\mc X$, while refa{3} is trivially satisfied on $\mc Y$. Thus, by the hypotheses of this proposition, the extension of $\mb P_t$ to $\mc P(\mc Y)$ satisfies a large deviations principle with good rate $I_{\mc Y}$. We then separate the two cases, wether \refa{3} or \refa{4} hold on $\mc X$.

If \refa{3} holds (on $\mc X$), then $\xi_{\mc Y}(x)=+\infty$ for $x\in \mc Y\setminus X$ (since neighborhoods of such points $x$ in $\mc Y$ are exactly complements of compact subsets of $\mc X$). Thus the map $\Pi \colon \mc P(\mc Y) \to \mc P(\mc X)$ defined as
\begin{equation*}
\Pi(\nu)=
\begin{cases}
\nu(\cdot|\mc X):= \frac{\nu(\cdot\,\cap \mc X)}{\nu(\mc X)} & \qquad \text{if $\nu(\mc X)>0$}
\\
\bar \mu & \qquad \text{otherwise}
\end{cases}
\end{equation*}
is continuous on the domain of $I_{\mc Y}$. Since $\Pi$ is just the restriction map for probabilities concentrated on $\mc X$, the extension of $\mb P_t$ to $\mc P(\mc Y)$ is mapped to $\mb P_t$ by $\Pi$. 
Then by contraction principle \cite[Chapter 4.2]{DZ}, $I$ is good and $\mb P_t$ satisfies a good large deviations principle on $\mc P(\mc X)$ with rate $I$. It is immediate to check that $\Pi$ preserves the convexity, so $I$ is convex.

Suppose now \refa{4} holds (but not \refa{3}). Consider the map $\Pi' \colon \mc P(\mc Y) \to \mc M_1(\mc X)$ defined by
\begin{equation*}
\Pi'(\nu)(f)= \nu(f) \qquad \forall f\in C_c(\mb X)
\end{equation*}
where we also identified $f$ with its unique continuous extension on $\mc Y$ (namely $f(x)=0$ for $x\in \mc Y \setminus \mc X$). Then $\Pi'$ is continuous, and we conclude again by contraction principle.
\end{proof}
Motivated by the previous remark, hereafter we assume $\mc X$ to be compact, with no loss of generality.

For $\delta>0$, define $\xi_\delta\colon \mc X \to [0,+\infty]$ as
 \begin{equation}
 \label{e:xidelta}
\xi_\delta(x)= \sup \big\{c\,:\: \bar \mu(e^{c \tau} 
                                \un{B_\delta(x)})<+\infty \big\}
\end{equation}
In particular $\xi =\sup_{\delta>0} \xi_\delta$. Let $\hat{\xi}_\delta$ be
the lower semicontinuous envelope of $\xi_\delta$.
 \begin{lemma}
 \label{l:xilsc}
 For all $x\in \mc X$, $\xi(x)=\sup_{\delta>0} \hat{\xi}_\delta(x)$.
 In particular $\xi$ is lower semicontinuous.
\end{lemma}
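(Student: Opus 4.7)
The strategy is to establish the identity by a double inequality, exploiting the monotonicity of $\xi_\delta$ in $\delta$ together with a simple ball-containment argument.

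The first (easy) inequality $\sup_{\delta>0}\hat\xi_\delta(x)\le \xi(x)$ is immediate: by definition of the lower semicontinuous envelope, $\hat\xi_\delta\le\xi_\delta$ pointwise, and the remark preceding the lemma (that $\xi=\sup_{\delta>0}\xi_\delta$, which follows because $\bar\mu(e^{c\tau}\un{B_\delta(x)})$ is monotone in $\delta$) yields $\sup_\delta \hat\xi_\delta\le\sup_\delta\xi_\delta=\xi$.

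The substantive step is the reverse inequality. The key geometric fact is that for $0<\delta'<\delta$ and any $y\in B_{\delta-\delta'}(x)$ one has $B_{\delta'}(y)\subset B_\delta(x)$, so $\un{B_{\delta'}(y)}\le \un{B_\delta(x)}$ and hence $\bar\mu(e^{c\tau}\un{B_{\delta'}(y)})\le \bar\mu(e^{c\tau}\un{B_\delta(x)})$ for every $c\ge 0$. Taking the supremum over $c$ making the right-hand side finite shows $\xi_{\delta'}(y)\ge \xi_\delta(x)$ for all $y\in B_{\delta-\delta'}(x)$. Therefore
\begin{equation*}
\hat\xi_{\delta'}(x)=\liminf_{y\to x}\xi_{\delta'}(y)\ge \xi_\delta(x).
\end{equation*}
Taking first the supremum over $\delta>\delta'$ and then over $\delta'>0$ (and using that $\sup_{\delta>\delta'}\xi_\delta(x)=\sup_{\delta>0}\xi_\delta(x)=\xi(x)$ by monotonicity in $\delta$) gives $\sup_{\delta'>0}\hat\xi_{\delta'}(x)\ge\xi(x)$, which combined with the first step yields the claimed equality.

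The final statement, lower semicontinuity of $\xi$, is then a formal consequence: $\xi$ is the pointwise supremum of the family $\{\hat\xi_\delta\}_{\delta>0}$ of lower semicontinuous functions, and a supremum of lsc functions is lsc. I do not anticipate any genuine obstacle here; the only point that requires care is making sure the ball containment argument is applied for $\delta'$ strictly smaller than $\delta$, so that a full neighbourhood of $x$ (of radius $\delta-\delta'>0$) is available on which the pointwise comparison $\xi_{\delta'}(y)\ge \xi_\delta(x)$ holds, thereby allowing one to pass to the $\liminf_{y\to x}$.
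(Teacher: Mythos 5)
Your argument is correct in substance and follows essentially the same route as the paper's proof: the key step is the triangle-inequality ball containment, and the paper simply specializes your pair $(\delta',\delta)$ to $(\delta,2\delta)$ so that $y\in B_\delta(x)$ gives $\xi_\delta(y)\ge \xi_{2\delta}(x)$, after which a single supremum in $\delta$ finishes. One small imprecision in your last step: for a \emph{fixed} $\delta'>0$ it is not true that $\sup_{\delta>\delta'}\xi_\delta(x)=\xi(x)$, since $\xi_\delta$ is non-increasing in $\delta$, so that supremum is only $\lim_{\delta\downarrow\delta'}\xi_\delta(x)\le\xi_{\delta'}(x)$; what actually rescues the conclusion is the double supremum, namely $\sup_{\delta'>0}\sup_{\delta>\delta'}\xi_\delta(x)=\sup_{\delta>0}\xi_\delta(x)=\xi(x)$, because every $\delta>0$ admits some $\delta'<\delta$.
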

\begin{proof}
  By the very definition of $\xi_\delta$, if $y \in B_\delta(x)$, then
  $\xi_{2 \delta}(x) \le \xi_{\delta}(y)$. Therefore
  \begin{equation*}
\xi_\delta(x) \ge \hat \xi_\delta(x):= 
   \sup_{\eps>0} \inf_{y \in B_\eps(x)} \xi_\delta(y) 
\ge \inf_{y \in B_\delta(x)}  \xi_\delta(y) \ge \xi_{2 \delta}(x)
\end{equation*}
The lemma follows taking the supremum in $\delta>0$.
\end{proof}

Let $LSC(\mc X)$ be the set of lower semicontinuous functions $f:\mc X
\to ]-\infty,+\infty]$. If $f \in LSC(\mc X)$ then $f$ is bounded from
below.
\begin{lemma}
  \label{l:unifint}
Recall \eqref{e:xidelta}. For all $M<+\infty$ and $\eps,\,\delta>0$ (hereafter $a\wedge b:= \min(a,b)$)
\begin{equation}
\label{e:xiup}
 \bar \mu (e^{(\xi_{\delta} \wedge M -\eps) \tau}) < +\infty .
\end{equation}
On the other hand, if $f \in LSC(\mc X)$ is such that
\begin{equation}
\label{e:xidown}
\bar \mu (e^{\tau f}) <+\infty,
\end{equation}
then $f(x) \le \xi(x)$ for \emph{all} $x \in \mc X$. In particular
\begin{equation}
\label{e:xi2}
\xi(x) = \sup \{f(x),\,f\in LSC(\mc X)\,:\: \bar \mu (e^{\tau f}) <+\infty\} .
\end{equation}
\end{lemma}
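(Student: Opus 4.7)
The plan is to treat the upper bound \eqref{e:xiup} and the lower bound $f\le\xi$ separately, and then combine them to obtain the variational formula \eqref{e:xi2}.

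For \eqref{e:xiup}, I exploit compactness of $\mc X$ to extract a finite cover $\mc X=\bigcup_{j=1}^N B_{\delta/2}(y_j)$. The triangle inequality gives $B_{\delta/2}(y_j)\subset B_\delta(x)$ for every $x\in B_{\delta/2}(y_j)$, which, after comparing the defining exponential integrals and taking the supremum over admissible exponents, yields $\xi_\delta(x)\le \xi_{\delta/2}(y_j)$ on each covering ball. Summing over the cover,
\begin{equation*}
\bar\mu\bigl(e^{(\xi_\delta\wedge M-\eps)\tau}\bigr)\le \sum_{j=1}^N \bar\mu\bigl(e^{(\xi_{\delta/2}(y_j)\wedge M-\eps)\tau}\un{B_{\delta/2}(y_j)}\bigr),
\end{equation*}
and each term is finite because $\xi_{\delta/2}(y_j)\wedge M-\eps$ is strictly below $\xi_{\delta/2}(y_j)$, so the corresponding restricted exponential integral is finite by the very definition of $\xi_{\delta/2}(y_j)$ as the supremum of admissible exponents (the cases $\xi_{\delta/2}(y_j)=+\infty$, $\le M$, or $>M$ are all handled uniformly; non-positive exponents are also harmless since $\tau\ge 0$).

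For the lower bound I may assume $f(x)>0$, for otherwise $\xi(x)\ge 0\ge f(x)$ trivially. Given $0\le c<f(x)$, lower semicontinuity yields $\delta>0$ such that $f> c$ on $B_\delta(x)$, whence $e^{c\tau}\un{B_\delta(x)}\le e^{\tau f}$ pointwise. Integrating gives $\bar\mu(e^{c\tau}\un{B_\delta(x)})<+\infty$, so $\xi_\delta(x)\ge c$ by definition of $\xi_\delta$; letting $c\uparrow f(x)$ and using $\xi\ge\xi_\delta$ concludes.

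The variational identity \eqref{e:xi2} follows by combining the two halves. The inequality $\xi(x)\ge \sup\{f(x):\dots\}$ is the part just proved. For the reverse inequality, fix $c<\xi(x)$; by Lemma~\ref{l:xilsc} some $\delta>0$ yields $\hat\xi_\delta(x)>c$. Choosing $M>c$ and $\eps>0$ sufficiently small, the candidate $f:=\hat\xi_\delta\wedge M-\eps$ is lower semicontinuous with $f(x)\ge c$, and since $f\le \xi_\delta\wedge M-\eps$ pointwise, \eqref{e:xiup} gives $\bar\mu(e^{\tau f})<+\infty$. The main delicate point is the choice of radius in the covering argument for \eqref{e:xiup}: covering by balls of the same radius $\delta$ only gives $B_\delta(y_j)\subset B_{2\delta}(x)$, comparing $\xi_\delta(x)$ to $\xi_{2\delta}(y_j)$ in the \emph{wrong} direction; using radius $\delta/2$ for the covering centers reverses this comparison and is precisely what allows the reduction to the single-ball estimate.
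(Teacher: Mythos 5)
Your proof is correct and follows essentially the same argument as the paper: a finite cover of the compact space $\mc X$ by balls of radius $\delta/2$ gives $\xi_\delta(x)\le\xi_{\delta/2}(y_j)$ for $x\in B_{\delta/2}(y_j)$, reducing \eqref{e:xiup} to the defining single-ball estimates, while \eqref{e:xidown} follows by using lower semicontinuity to bound $f$ from below on a small ball around $x$ (the paper phrases this as a proof by contradiction, you argue directly — same content). The only addition is that you spell out the deduction of \eqref{e:xi2} via the candidates $\hat\xi_\delta\wedge M-\eps$ together with Lemma~\ref{l:xilsc}, a step the paper leaves implicit.
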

\begin{proof}
  Fix $M,\,\eps,\,\delta>0$ and let
  $\{B_{\delta/2}(y_1),\ldots,B_{\delta/2}(y_n)\}$ be a finite
  covering of the compact space $\mc X$ with balls of radius
  $\delta/2$. Since $\xi_{\delta}(x) \le \xi_{\delta/2}(y_i)$ for $x
  \in B_{\delta/2}(y_i)$
  \begin{equation*}
    \begin{split}
& \bar \mu (e^{(\xi_{\delta} \wedge M -\eps) \tau}) 
    \le \sum_{i=1}^n \bar \mu (e^{(\xi_{\delta} \wedge M -\eps) \tau} 
                                \un {B_{\delta/2}(y_i)}) 
    \le \sum_{i=1}^n \bar \mu (e^{(\xi_{\delta/2}(y_i) \wedge M -\eps) \tau} 
                                \un {B_{\delta/2}(y_i)}) .
    \end{split}
  \end{equation*}
  Since $\xi_{\delta/2}(y_i) \wedge M -\eps< \xi_{\delta/2}(y_i)$,
  each term in the summation in the last line of the above formula is
  finite by the very definition of $\xi_{\delta/2}(y_i)$. Thus \eqref{e:xiup} holds.

  Let now $f\in LSC(\mc X)$, and suppose that for some $x \in \mc X$
  and $\eps>0$, $f(x) \ge \xi(x)+2\eps$. Since $f$ is lower
  semicontinuous, there exists $\delta>0$ such that $\inf_{y \in
    B_\delta(x)}f(y)\ge \xi(x)+\eps$. Then
\begin{equation*}
  \bar \mu(e^{\tau f}) \ge \bar \mu(e^{\tau f} \un {B_{\delta}(x)}) 
\ge
  \bar \mu(e^{\tau [\xi(x)+\eps]} \un {B_{\delta}(x)}) 
\ge  
  \bar \mu(e^{\tau [\xi_{\delta}(x)+\eps]} \un {B_{\delta}(x)})=+\infty.
\end{equation*}
Therefore if \eqref{e:xidown} holds, then $f \le \xi$ everywhere.
\end{proof}

\begin{proposition}
\label{p:Ibound}
For each $\nu \in \mc P(\mc X)$
\begin{equation}
 \label{e:Ibound}
 I(\nu) = \sup \big\{\nu(f),\, f\in LSC(\mc X)\,:\: \bar \mu(e^{\tau\,f})
  \le 1 \big\}=:\tilde{I}(\nu).
\end{equation}
In particular Proposition~\ref{p:Ilsc} holds.
\end{proposition}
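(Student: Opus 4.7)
\emph{Plan.} The content of Proposition~\ref{p:Ibound} is essentially the identity $I=\tilde I$; the remaining assertions of Proposition~\ref{p:Ilsc} will follow almost for free from the variational form of $\tilde I$. I would organise the proof as two inequalities.

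For the easy direction $\tilde I\le I$, fix $f\in LSC(\mc X)$ with $\bar\mu(e^{\tau f})\le 1$. By the characterisation \eqref{e:xi2} of $\xi$ in Lemma~\ref{l:unifint}, $f\le \xi$ pointwise, hence $\nu_s(f)\le \nu_s(\xi)$. If $\nu_a(1/\tau)=0$ then \refa{1} forces $\nu_a\equiv 0$; otherwise \eqref{e:nubar} gives $\nu_a(f)=\nu_a(1/\tau)\,\bar\nu_a(\tau f)$, and applying the Donsker--Varadhan inequality contained in \eqref{e:relent1}, after a monotone truncation $\tau f\wedge n$ to handle unboundedness, yields $\bar\nu_a(\tau f)\le \mb H(\bar\nu_a|\bar\mu)+\log\bar\mu(e^{\tau f})\le \mb H(\bar\nu_a|\bar\mu)$. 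Summing the two contributions gives $\nu(f)\le I(\nu)$.

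For the reverse inequality $I\le \tilde I$, assume $I(\nu)<\infty$ and fix $\eps>0$; the goal is to produce $f\in LSC(\mc X)$ with $\bar\mu(e^{\tau f})\le 1$ and $\nu(f)\ge I(\nu)-\eps$. Two building blocks are naturally at hand: first, by Lemma~\ref{l:xilsc} and \eqref{e:xiup}, for $\delta$ small and $M$ large the function $g:=\hat\xi_\delta\wedge M-\eps$ is LSC, approximates $\xi$ against $\nu_s$, and satisfies $\bar\mu(e^{\tau g})<\infty$; second, the variational formula \eqref{e:relent1} furnishes $\varphi\in C_b(\mc X)$ with $\bar\mu(e^\varphi)=1$ and $\bar\nu_a(\varphi)\ge \mb H(\bar\nu_a|\bar\mu)-\eps$. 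Exploiting $\nu_s\perp \bar\mu$ and the regularity of Borel measures on the (compact) Polish space $\mc X$, I would pick a compact $K$ carrying almost all of $\nu_s$ with $\bar\mu(K)=0$ and an open $U\supset K$ with $\bar\mu(U)$ arbitrarily small, then glue the two recipes by taking $f$ essentially equal to $g$ on $U$ and to $\varphi/\tau$ on $U^c$, up to a small normalising correction that restores $\bar\mu(e^{\tau f})\le 1$; the vanishing of the correction is ensured by $\bar\mu(U)\downarrow 0$ together with $\bar\mu(e^\varphi)=1$.

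The principal obstacle is that $\varphi/\tau$ is not a priori lower semicontinuous, since $\tau$ is only measurable, so the naive glueing need not lie in $LSC(\mc X)$. The fix I would pursue is to carry out the construction at the level of $\tau f$: build a measurable $h$ with $\bar\mu(e^h)\le 1$ equal to $\varphi$ on $U^c$ and to $\tau g$ on $U$, then extract an LSC candidate by passing to the lower semicontinuous envelope of $h/\tau$, checking that the constraint $\bar\mu(e^{\tau f})\le 1$ survives after a vanishing correction and that the loss $\nu(f-h/\tau)$ tends to $0$ as $\bar\mu(U)\to 0$, $\delta\to 0$, $M\to\infty$, $\eps\to 0$. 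Once $I=\tilde I$ is established, Proposition~\ref{p:Ilsc} is immediate: $\tilde I$ is a supremum of the affine functionals $\nu\mapsto \nu(f)$, each of which is narrowly lower semicontinuous on $\mc P(\mc X)$ by the Portmanteau characterisation (since $\mc X$ is compact, every $f\in LSC(\mc X)$ is bounded below), so $\tilde I$ is itself LSC and convex; compactness of $\mc P(\mc X)$ together with $I\not\equiv+\infty$ (cf.\ Remark~\ref{r:conv}) then yields goodness.
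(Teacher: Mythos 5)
Your easy direction ($\tilde I\le I$) is sound and is, in substance, the paper's second inequality read from the opposite end: the paper lower bounds $I(\nu)$ by $\nu_a(f)+\nu_s(\xi)$ for $f=\varphi/\tau$, you upper bound $\nu(f)$ by $I(\nu)$ using $f\le\xi$ (Lemma~\ref{l:unifint}) and the entropy inequality. One small imprecision: truncating only from above ($\tau f\wedge n$) is not enough, since $\tau f$ is also unbounded below; one should truncate from both sides, using that $f$ is bounded below on the compact $\mc X$ so $\bar\nu_a((\tau f)^-)\le c\,\bar\nu_a(\tau)<+\infty$ once $\nu_a(1/\tau)>0$. (And the trivial case $\nu_a(1/\tau)=+\infty$, where $I(\nu)=+\infty$, should be noted.) These are easily repaired.

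The hard direction $I\le\tilde I$ has a genuine gap. You rightly identify that $\varphi/\tau$ is not LSC and propose to fix the glued measurable candidate $h$ by passing to the lower semicontinuous envelope of $h/\tau$. But the LSC envelope is the largest LSC \emph{minorant}, and nothing controls how far it drops below $h/\tau$: on a set of positive $\nu$-measure where $\tau$ oscillates, the envelope can be much smaller, so the loss $\nu\bigl(f-h/\tau\bigr)$ need not vanish. You flag this as "the principal obstacle," but the proposed fix does not resolve it.

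The paper avoids the envelope entirely by a structural idea your plan is missing: before constructing any near-optimizer, it \emph{enlarges the class of admissible test functions}. Concretely, it first proves the inequality \eqref{e:Ibound2}: the supremum defining $\tilde I(\nu)$ does not decrease if $LSC(\mc X)$ is replaced by all Borel measurable, $\nu$-integrable $f$ with $\bar\mu(e^{\tau f})<1$ and $f\le\hat\xi_\delta\wedge M-\eps$. The proof of this enlargement is the key step: density of $C(\mc X)$ in $L^1(\nu+\bar\mu)$ gives LSC approximants $f_n\to f$ (in $L^1(\nu)$ and $\bar\mu$-a.e.), which may be taken $\le\hat\xi_\delta\wedge M-\eps$; then \eqref{e:xiup} supplies the dominating function so that $\bar\mu(e^{\tau f_n})\to\bar\mu(e^{\tau f})<1$, hence $\bar\mu(e^{\tau f_n})\le 1$ eventually. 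Once the supremum is known to run over this broader class, one simply plugs in the single measurable $f$ of the form \eqref{e:f} — equal to $(\varphi/\tau)\wedge\hat\xi_\delta\wedge M-\eps$ on a Borel set carrying $\bar\mu$ and $\nu_a$, and to $\hat\xi_\delta\wedge M-\eps$ on its complement — and takes limits $M\to\infty$, $\delta\downarrow 0$, $\eps\downarrow 0$. This $L^1$-density-plus-domination enlargement is the ingredient your proposal lacks.
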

\begin{proof}
Fix $\nu \in \mc P(\mc X)$, and let $f \colon \mc X \to \bb R$ be Borel measurable, $\nu$-integrable, such that $\bar \mu(e^{\tau\,f})<1$ and $f \le (\hat \xi_\delta \wedge M- \eps)$ for some $M,\,\delta,\,\eps>0$. Since continuous functions are dense in $L_1(\nu+\bar \mu)$, there exists a sequence $(f_n)$ in $LSC(\mc X)$ such that $f_n \to f$ in $L_1(d\nu)$ and (up to passing to a subsequence) also $\bar \mu$-almost everywhere.
 Moreover one can assume $f_n \le \hat \xi_\delta \wedge M-\eps$, since the sequence $f_n \wedge (\hat \xi_\delta
  \wedge M-\eps)$ is in $LSC(\mc X)$ and enjoys the aforementioned properties as well. Dominated convergence and \eqref{e:xiup} imply $\lim_n \bar \mu (e^{\tau f_n}) = \bar \mu (e^{\tau f})<1$.
  Therefore $\bar \mu(e^{\tau f_n}) \le 1$ for $n$ large enough. Thus
\begin{equation}
 \label{e:Ibound2}
 \begin{split}
 & \tilde I(\nu)
\ge  
   \sup_{M,\,\delta,\,\eps>0} \sup \big\{ 
                   \nu(f),\, \text{$f$ $\nu$-integrable such that}\quad
\bar \mu(e^{\tau\,f})< 1,\,  f \le \hat \xi_\delta \wedge M-\eps \big\} .
\end{split}
\end{equation}
By \refa{2}, the Borel set $A =\{\xi=+\infty\} \setminus \mathrm{Supp}(\nu_s)$ is 
such that $\bar \mu$ and $\nu_a$ are concentrated on $A$ and $\nu_s$ is concentrated on $A^c$. Fix
$M,\,\delta,\,\eps>0$ and take $\varphi \in C(\mc X)$ such that $\bar \mu
(e^\varphi)\le 1$. In the right hand side of \eqref{e:Ibound2}
consider a $f$ of the form
\begin{equation}
\label{e:f}
f=\big(\frac{\varphi}{\tau}\wedge 
  \hat \xi_\delta \wedge M \big) \un {A}
+ (\hat \xi_\delta \wedge M)\un  {A^c} -\eps.
\end{equation}
Then $\bar \mu(e^{\tau f})=  \bar \mu(e^{\tau f} \un {A})
\le \bar \mu (e^{\varphi -\eps}) \le
e^{-\eps}<1$.

If $\nu_a(1/\tau)=+\infty$, take $\varphi \equiv 1$ in
\eqref{e:f}. Then $f$ is $\nu$-integrable and by monotone convergence $\nu(f) \to +\infty$ as one lets $M\to
+\infty$ and $\delta \downarrow 0$, so that $\tilde{I}(\nu)=+\infty$
by \eqref{e:Ibound2}. Thus $\tilde I(\nu)=I(\nu)=+\infty$ whenever $\nu_a(1/\tau)=+\infty$.

Consider then the case $\nu_a(1/\tau)<+\infty$. Since $\varphi$ is bounded, any $f$ of the form \eqref{e:f} is $\nu$-integrable,  and thus by \eqref{e:Ibound2}
\begin{equation*}
\tilde{I}(\nu) \ge \nu(f) =
\nu_a\big(\frac{\varphi}{\tau}\wedge \hat \xi_\delta \wedge M\big)+
\nu_s(\hat \xi_\delta \wedge M)-\eps.
\end{equation*}
Take the limit $M \to+\infty$, $\delta \downarrow 0$, $\eps \downarrow 0$. Monotone convergence and Lemma~\ref{l:xilsc} then yield
\begin{equation*}
\tilde{I}(\nu) \ge \nu_a\big(\frac{\varphi}{\tau}\big)+
\nu_s(\sup_{\delta>0} \hat \xi_\delta)= \nu_a\big(\frac{\varphi}{\tau}\big)+ \nu_s(\xi)=\nu_a(1/\tau) \bar \nu_a(\varphi)+\nu_s(\xi)
\end{equation*}
where the last equality is a direct consequence of \eqref{e:nubar}. Now optimize over $\varphi$ to get
\begin{equation*}
  \begin{split}
 &   \tilde{I}(\nu) \ge 
 \nu_a(1/\tau) \sup \big\{ \bar \nu_a(\varphi),
  \,\varphi \in C(\mc X)\,:\:\bar \mu(e^\varphi) \le 1  \big\} + \nu_s(\xi)
\\
& \phantom{\tilde{I}(\nu)}
\ge  \nu_a(1/\tau) \sup \big\{ \bar \nu_a(\varphi) -\log \bar \mu(e^\varphi),
  \,\varphi \in C(\mc X)\,:\:\bar \mu(e^\varphi) = 1  \big\} + \nu_s(\xi)
\end{split}
\end{equation*}
Notice that the condition $\bar \mu(e^\varphi) = 1$ can now be dropped in the supremum in the last line above, since for any $c\in \bb R$ the change $\varphi \mapsto \varphi+c$ leaves the quantity $ \bar \nu_a(\varphi) -\log \bar \mu(e^\varphi)$ invariant. Therefore the supremum over $\varphi$ equals the relative entropy as defined in  \eqref{e:relent1}, so that $\tilde I \ge I$.

In order to prove $I(\nu) \ge \tilde I(\nu)$, one only needs to consider the
case $\nu_a(1/\tau)<+\infty$, the inequality being trivial
otherwise. Then for $\varphi \in L_1(d\bar \nu_a)$ such that $\bar
\mu(e^\varphi)\le 1$,
\begin{equation*}
 \begin{split}
&
\nu_a(1/\tau) \mb H(\bar \nu_a|\bar \mu)
\ge
\nu_a(1/\tau) \big[\bar \nu_a(\varphi)-\log \bar\mu (e^{\varphi}) \big]
\ge \nu_a(\varphi/\tau)=\nu_a(f),
 \end{split}
\end{equation*}
where $f:=\varphi/\tau$ and the above conditions on $\varphi$
translates into $f \in L_1(d\nu_a)$ and $\bar \mu(e^{\tau f})\le
1$. Therefore, optimizing over $f\in LSC(\mc X)$ satisfying these two
conditions, and noting that Lemma~\ref{l:unifint} implies $f \le \xi$
for such a $f$
\begin{equation*}
\begin{split}
&   I(\nu)=\nu_a(1/\tau) \mb H(\bar \nu_a|\bar \mu) +\nu_s(\xi)
\\ &  \phantom{
I(\nu)
}
    \ge \sup \{\nu_a(f),\,f\in LSC(\mc X) \cap L_1(d\nu_a)\,:\:
                        \bar \mu (e^{\tau \,f}) \le 1\}+\nu_s(\xi)
\\ &  \phantom{
I(\nu)
}
    = \sup \{\nu_a(f)+\nu_s(\xi),\,f\in LSC(\mc X)\,:\:
                        \bar \mu (e^{\tau \,f}) \le 1\}
\\ &  \phantom{
I(\nu)
}
 \ge  \sup \{\nu_a(f)+\nu_s(f),\,
              f\in LSC(\mc X)\,:\: \bar \mu (e^{\tau \,f}) \le 1\}
= \tilde I (\nu).
\end{split}
\end{equation*}
Now
\eqref{e:Ibound}  states in particular that $I$ is the supremum of a family of linear lower semicontinuous
mappings, thus Proposition~\ref{p:Ilsc} follows.
\end{proof}

\begin{lemma}
 \label{l:xidown}
For $A \subset \mc X$ a Borel set, define
\begin{equation*}
\xi^{A}:= \sup \big\{c\geq 0\,:\: \bar \mu(e^{c \tau} \un{A})<+\infty
 \big\},
\end{equation*}
\begin{equation*}
\underline \xi^{A}:=- \varlimsup_{L \to +\infty} \frac{1}L \log
\bar \mu\big(\{\tau \ge L \} \cap A\big).
\end{equation*}
Then $\underline \xi^{A}=\xi^{A}$.
\end{lemma}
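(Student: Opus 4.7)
My plan is to establish the two inequalities $\underline{\xi}^A\geq\xi^A$ and $\xi^A\geq\underline{\xi}^A$ separately, via the standard duality between exponential moments and exponential tail decay: Chebyshev's inequality handles one direction, and the layer-cake representation of $e^{c\tau}$ handles the other.

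For $\underline{\xi}^A\geq\xi^A$, I would fix any $c\in[0,\xi^A)$, so that $\bar\mu(e^{c\tau}\un{A})<+\infty$. Chebyshev gives, for every $L>0$,
\begin{equation*}
\bar\mu(\{\tau\geq L\}\cap A) \;\leq\; e^{-cL}\,\bar\mu(e^{c\tau}\un{A}).
\end{equation*}
Taking logarithms, dividing by $L$, and letting $L\to+\infty$ yields $\varlimsup_{L\to+\infty}\tfrac{1}{L}\log\bar\mu(\{\tau\geq L\}\cap A)\leq -c$, that is $\underline{\xi}^A\geq c$. Taking the supremum over $c<\xi^A$ gives $\underline{\xi}^A\geq\xi^A$.

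For the reverse $\xi^A\geq\underline{\xi}^A$, fix $0\leq c<c'<\underline{\xi}^A$; by definition of $\underline{\xi}^A$ there exists $L_0$ such that $\bar\mu(\{\tau\geq L\}\cap A)\leq e^{-c'L}$ for all $L\geq L_0$. Writing $e^{c\tau}=1+\int_0^{+\infty} c\,e^{cs}\un{\{\tau\geq s\}}\,ds$ (valid $\bar\mu$-a.e., since $\bar\mu(\{\tau=+\infty\})=0$ by \refa{1}) and applying Fubini,
\begin{equation*}
\bar\mu(e^{c\tau}\un{A}) \;=\; \bar\mu(A)+c\int_0^{+\infty} e^{cs}\,\bar\mu(\{\tau\geq s\}\cap A)\,ds.
\end{equation*}
Splitting the $s$-integral at $L_0$, the head is bounded by $c\,e^{cL_0}\,L_0$ and the tail by $c\int_{L_0}^{+\infty}e^{(c-c')s}\,ds<+\infty$, whence $\bar\mu(e^{c\tau}\un{A})<+\infty$ and $\xi^A\geq c$. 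Letting $c\uparrow\underline{\xi}^A$ closes the argument.

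No real obstacle is expected, but I would check that both inequalities degenerate gracefully in the boundary cases: if $\xi^A=0$ the Chebyshev bound is vacuous while $\underline{\xi}^A\geq 0$ holds by definition; if $\underline{\xi}^A=+\infty$ the layer-cake estimate goes through for every $c\geq 0$, giving $\xi^A=+\infty$ as well.
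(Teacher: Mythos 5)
Your proof is correct and rests on the same duality between exponential moments and exponential tail decay that the paper uses. The only difference is cosmetic: the paper derives both inequalities from the single layer-cake identity $\bar\mu(e^{c\tau}\un{A}) = \bar\mu(A) + c\int_0^{+\infty}e^{cL}\,\bar\mu(\{\tau\ge L\}\cap A)\,dL$, whereas you obtain the easy direction $\underline{\xi}^A\ge\xi^A$ by a direct Chebyshev bound and reserve the layer-cake/Fubini computation for the reverse inequality — a slight streamlining, but not a different route.
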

\begin{proof}

For $c>0$
\begin{equation*}
\bar \mu(e^{c\tau} \un A)= \int_{\bb R^+} d\eta\,\bar \mu(\{e^{c\tau}\ge \eta\}\cap A)=
c\,\int_{\bb R^+}dL\, \bar \mu(\{\tau \ge L\}\cap A) \,e^{c\,L}.
\end{equation*}
It is then easy to check that, for $c>\underline \xi^A$, $\bar \mu(e^{c\tau} \un A)=+\infty$, while if $\xi^A>0$ and $0<c<\xi^A$, then $\bar \mu(e^{c\tau} \un A)<+\infty$. It follows $\xi^A=\underline \xi^A$.
\end{proof}

\begin{proposition}
\label{p:jlsce}
Define $J:\mc P(\mc X) \to [0,+\infty]$ as
\begin{equation}
\label{e:J}
J(\nu)=
\begin{cases}
  I(\nu) & \text{if $\nu=\nu_a$},
\\
+\infty & \text{otherwise}.
\end{cases}
\end{equation}
$I$ is the lower semicontinuous envelope of $J$.
\end{proposition}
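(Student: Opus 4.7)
The plan is to prove $\bar J = I$, where $\bar J$ denotes the lower semicontinuous envelope of $J$, by establishing both inequalities. The inequality $I \le \bar J$ is immediate: by Proposition~\ref{p:Ilsc}, $I$ is lower semicontinuous, and $I \le J$ pointwise ($J$ coincides with $I$ on $\bar\mu$-absolutely continuous measures and equals $+\infty$ elsewhere), so $I$ lies in the family whose pointwise supremum defines $\bar J$.

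The substantive direction $\bar J \le I$ reduces to constructing, for each $\nu = \nu_a + \nu_s$ with $I(\nu) < +\infty$, a recovery sequence of $\bar\mu$-absolutely continuous probabilities $\nu_n \to \nu$ narrowly with $\limsup_n J(\nu_n) \le I(\nu)$. The absolutely continuous part requires no approximation. For the singular part, first truncate to $\nu_s^N := \nu_s|_{\{\xi \le N\}}$, which narrowly approximates $\nu_s$ with $\nu_s^N(\xi) \nearrow \nu_s(\xi)$ (since $\nu_s(\xi)<+\infty$ forces $\xi$ finite $\nu_s$-a.e.). For fixed $N$ and $\varepsilon > 0$, partition $\{\xi \le N\}$ into finitely many Borel sets $A_1,\ldots,A_k$ of diameter at most $\varepsilon$ on each of which $\xi$ oscillates by at most $\varepsilon$ (possible by the lower semicontinuity of $\xi$ from Lemma~\ref{l:xilsc}); set $c_i := \inf_{A_i}\xi$, $\beta_i := \nu_s^N(A_i)$, and define, with $m_i(L) := \bar\mu(A_i \cap \{\tau \ge L\})$ and $L_i \to +\infty$,
\begin{equation*}
\nu_n := \nu_a + \sum_i \beta_i \,\frac{\un{A_i \cap \{\tau \ge L_i\}}}{m_i(L_i)}\,\bar\mu + r_n\,\bar\mu,
\end{equation*}
where $r_n = \nu_s(\{\xi > N\})/\bar\mu(\mc X)$ restores total mass $1$. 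Narrow convergence $\nu_n \to \nu$ follows by a diagonal argument in $(N,\varepsilon,L)$: each summand supports mass $\beta_i$ inside $A_i$, so $|\nu_n(f) - \nu_a(f) - \nu_s^N(f)| \le \omega_f(\varepsilon)$ for bounded continuous $f$.

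The core computation is the entropy asymptotics. Restricted to absolutely continuous measures $J$ rewrites as $J(\nu_n) = \int u_n \log u_n \,d\bar\mu - \bar\mu(u_n) \log\bar\mu(u_n)$ with $u_n := (d\nu_n/d\bar\mu)/\tau$. Since the sets $B_i := A_i \cap \{\tau \ge L_i\}$ are pairwise disjoint with $\bar\mu(B_i) = m_i(L_i) \to 0$, the integral decomposes into a part over $\mc X \setminus \bigcup_i B_i$, which converges to $\int (g/\tau)\log(g/\tau)\,d\bar\mu$ where $g := d\nu_a/d\bar\mu$, and contributions over each $B_i$ asymptotic to $\beta_i(-\log m_i(L_i))/L_i + o(1)$, the error coming from the term $\bar\mu(\un{B_i}\log\tau/\tau) = O(m_i(L_i)\log L_i/L_i)$ estimated by integration by parts against the tail $m_i(\cdot)$. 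By Lemma~\ref{l:xidown}, $L_i$ can be selected so that $-\log m_i(L_i)/L_i \to \xi^{A_i}$, and a careful choice of the partition (so that $A_i$ is contained in a closed level set $\{\xi \ge c_i\}$ intersected with a small ball) yields $\xi^{A_i} \in [c_i, c_i+\varepsilon]$. Combining this with the convergence $\bar\mu(u_n)\log\bar\mu(u_n) \to \nu_a(1/\tau)\log\nu_a(1/\tau)$ (which cancels the excess arising from $\int (g/\tau)\log(g/\tau)\,d\bar\mu = \nu_a(1/\tau)\mb H(\bar\nu_a|\bar\mu) + \nu_a(1/\tau)\log\nu_a(1/\tau)$) produces $\limsup_n J(\nu_n) \le \nu_a(1/\tau)\mb H(\bar\nu_a|\bar\mu) + \nu_s^N(\xi) + O(\varepsilon)$, and diagonalizing in $(N,\varepsilon)$ gives $I(\nu)$. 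The main obstacle is the coherent calibration of $L_i$ across all $i$ so that the exponential tails $m_i(L_i) \sim e^{-L_i c_i}$ and the subleading $\log L_i/L_i$ corrections are simultaneously controlled, relying crucially on the tail characterization $\xi^{A_i} = \underline{\xi}^{A_i}$ of Lemma~\ref{l:xidown} to translate an entropy integral into a geometric decay rate.
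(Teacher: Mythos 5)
Your overall strategy matches the paper's: construct a recovery sequence of absolutely continuous measures by leaving $\nu_a$ alone and replacing $\nu_s$ with conditional restrictions of $\bar\mu$ to thin slabs $\{\tau\ge L\}\cap A_i$, then invoke Lemma~\ref{l:xidown} to identify the entropy cost of each slab with the local tail rate. However, there are several concrete gaps.

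First, the padding term $r_n\,\bar\mu$ is a genuine problem. You need $\nu_n(1/\tau)<+\infty$ for $J(\nu_n)<+\infty$, but $\bar\mu(1/\tau)$ may be infinite under the standing assumptions (e.g.\ $\bar\mu$ Lebesgue on $[0,1]$, $\tau(x)=x$, which satisfies \refa{1}--\refa{3}). For any finite $N$ with $\nu_s(\{\xi>N\})>0$ you then get $J(\nu_n)=+\infty$, so the diagonalization never starts. The paper sidesteps this entirely by partitioning \emph{all} of $\mc X$ (after the compact embedding) into small cells $A_i^\delta$, so the slabs already carry the full singular mass and no padding is needed. Your truncation to $\{\xi\le N\}$ is unnecessary.

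Second, the assertion that the partition can be chosen so that $\xi$ oscillates by at most $\varepsilon$ on each cell, "possible by the lower semicontinuity of $\xi$," is false. Lower semicontinuity permits upward jumps of any size, so small diameter alone gives no modulus of continuity. The related claim $\xi^{A_i}\in[c_i,c_i+\varepsilon]$ with $c_i=\inf_{A_i}\xi$ is likewise unjustified: $\xi^{A_i}$ is the $\bar\mu$-tail rate over the whole cell and can exceed $\inf_{A_i}\xi$ by an amount that doesn't shrink with the diameter. Luckily, for the inequality $\varlimsup J(\nu_n)\le I(\nu)$ you only need the one-sided bound $\sum_i\beta_i\,\xi^{A_i}\le\nu_s(\xi)$. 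That follows from the far weaker fact $\xi(x)\ge\xi^{A_i}$ at every interior point $x$ of $A_i$ (small balls $B_\eta(x)\subset A_i$ yield $\xi(x)\ge\xi_\eta(x)\ge\xi^{A_i}$), provided you arrange $\nu_s(\partial A_i)=0$ — which is exactly what the paper requires of the partition and which you omit. Drop the oscillation control and use this instead.

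Third, the paper's choice of weight on each slab is the $\tau$-biased measure $\tau(x)\,\bar\mu(dx\,|\,A_i^{\delta,L,M})/\bar\mu(\tau\,|\,A_i^{\delta,L,M})$, not the plain conditional $\bar\mu(\cdot\,|\,B_i)$ you propose. This is not cosmetic: with the $\tau$-biased weight, $I$ of each piece equals exactly $-\log\bar\mu(A_i^{\delta,L,M})/\bar\mu(\tau\,|\,A_i^{\delta,L,M})\le -L^{-1}\log\bar\mu(A_i^{\delta,L,M})$, and convexity of $I$ (Proposition~\ref{p:Ibound}) then yields $J(\nu_n)\le I(\nu)$ plus a remainder with no estimation of cross-terms or $\log\tau$ corrections. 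Your direct $\int u_n\log u_n\,d\bar\mu$ computation is in principle salvageable (the $\log\tau$ correction you flag does carry the right sign), but it is substantially more work and the cross-terms between the $\nu_a$ part and the slabs on overlapping supports are not actually estimated. Also note the paper caps $\tau$ in $[L_i,M_i]$ rather than just bounding it below; without the upper cap $M_i$ the slab conditional can be ill-behaved. I recommend you adopt the paper's $\tau$-biased construction and the convexity splitting.
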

Notice that in the classical case $\tau \equiv 1$, $J$ coincides with $I$. However, in this general case, $I=J$ iff $\xi\equiv +\infty$.
\begin{proof}[Proof of Proposition~\ref{p:jlsce}]
  Since $J\ge I$ and $I$ is lower semicontinuous, the lower
  semicontinuous envelope of $J$ is greater than $I$. Therefore it is
  enough to show that for each $\nu \in \mc P(\mc X)$ such that
  $I(\nu)<+\infty$, there exists a sequence $\nu^n \to \nu$ such that
  $\varlimsup_n J(\nu^n) \le I(\nu)$.

 Let $\nu=\nu_a+\nu_s$ satisfy $I(\nu)<+\infty$. Since $\mc X$ is compact, for
  each $\delta\in (0,1)$ there exist $n^\delta \in \bb N^+$ and a
  finite Borel partition $(A^\delta_1,\ldots,\,A^\delta_{n^\delta})$
  of $\mc X$ such that each $A^\delta_i$ has diameter bounded by
  $\delta$, has nonempty interior, and satisfies $\nu_s(\partial A^\delta_i)=0$. For
  $\delta>0$ and $M > L \ge 0$, define
\begin{equation*}
  A_i^{\delta,L,M}:=\{ L\le  \tau \le M\}\cap
  A^\delta_i.
\end{equation*}
Fix a $j \in \{1,\ldots,\,n^\delta\}$. We claim that
\begin{equation}
 \label{e:mutau}
\text{if $\nu_s(A^\delta_j)>0$ then  $\forall L \ge 0,\,\exists M^L\ge L$ such that $\bar
\mu(A_j^{\delta,L,M})>0$ for all $M\ge M^L$.
}
\end{equation}
Indeed $\nu_s(\xi)\le I(\nu)<+\infty$, thus $\nu_s$ is concentrated on $\{\xi <+\infty\}$.
Since $\nu_s(\partial A_j^\delta)=0$, there exists a point $x_j^{\delta}$ in the interior of $A_j^\delta$ such that
$\xi(x_j^\delta)<+\infty$. Then, for each $c>\xi(x_j^\delta)$ and $\eps>0$
\begin{equation*}
\lim_{M\to +\infty} \bar \mu\big( e^{c\tau} \un {B_\eps(x_j^\delta)} \un { L\le  \tau \le M} \big)
=\bar \mu(e^{c\tau} \un {B_\eps(x_j^\delta)} \un {\tau \ge L})=+\infty.
\end{equation*}
Hence for $M$ large enough $\{L\le \tau \le M\}$ has positive $\bar \mu$-measure in each
neighbourhood of $x_j^\delta$, including $A_j^\delta$. The claim \eqref{e:mutau} is thus proved.

By \eqref{e:mutau}, for each $\mb L= (L_1,\,L_2,\ldots) \in [0,+\infty[^{\bb N}$ there exists $\mb M^{\mb L}
 \in [0,+\infty[^{\bb N}$, such that the probability measure
\begin{equation}
\label{e:nudeltaL}
  \nu^{\delta,\mb L,\mb M}(dx):= \nu_a(dx) + \sum_{i=1}^{n^\delta} 
\nu_s(A_i^\delta) \frac{\tau(x) 
 \bar \mu(dx |A^{\delta,L_i,M_i}_i)}{\bar \mu(\tau|A^{\delta,L_i,M_i}_i)}
\end{equation}
is well defined whenever $\mb M\ge \mb M^{\mb L}$, provided the terms in the summation are understood to vanish whenever
$\nu_s(A_i^\delta)$ does. It follows straightforwardly from this definition that for each
$\varphi \in C_b(\mc X)$
\begin{equation}
 \label{e:nuconv}
 \begin{split}
 & \lim_{\delta \downarrow 0}  
\sup_{\mb L \in [0,+\infty[^{\bb N},\,\mb M \ge \mb M^{\mb L}}  \big|\nu^{\delta, \mb L,\mb M}(\varphi) 
        - \nu(\varphi)\big|
        \\
 & \quad \le \varlimsup_{\delta \downarrow 0}  
 \sum_{i=1}^{n^\delta} \nu_s(A_i^\delta) \big[\sup_{x \in A_i^\delta} \varphi(x)- \inf_{x \in A_i^\delta} \varphi(x)\big] 
        =0.
\end{split}
\end{equation}

Note that for each $\delta >0$ and $\mb L,\,\mb M \in [0,+\infty[^{\bb N}$ with $\mb M\ge \mb M^L$, $\nu^{\delta,\mb L,\mb M}$ is absolutely continuous 
with respect to $\bar \mu$. By 
the convexity of $I$ proved in Proposition~\ref{p:Ibound}
\begin{equation}
 \label{e:convexbound}
\begin{split}
 J(\nu^{\delta,\mb L,\mb M} )  = & I(\nu^{\delta,\mb L,\mb M}) \le  \nu_a(\mc X)
              I\Big(\frac{1}{\nu_a(\mc X)}\nu_a\Big)
 \\ & \phantom{ =  I(\nu^{\delta,\mb L}) \le}
+ \sum_{i=1}^{n^\delta} 
\nu_s(A_i^\delta)
I \Big( \frac{\tau(x) 
 \bar \mu(dx |A_i^{\delta,L_i,M_i})}{\bar \mu(\tau|A_i^{\delta,L_i,M_i})}\Big)
\\ & 
= I(\nu)- \Big[ \nu_s(\xi)- \sum_{i=1}^{n^\delta} 
\nu_s(A_i^\delta)
I \Big( \frac{\tau(x) 
 \bar \mu(dx |A_i^{\delta,L_i,M_i})}{\bar \mu(\tau|A_i^{\delta,L_i,M_i})}\Big)\Big]
\end{split}
\end{equation}
where the corresponding terms above are understood to vanish whenever $\nu_a(\mc X)$ or $\nu_s(A_i^\delta)$ do. By direct computation
\begin{equation*}
\begin{split}
I \Big( \frac{\tau(x)
 \bar \mu(dx |A^{\delta,L_i,M_i}_i)}{\bar \mu(\tau|A^{\delta,L_i,M_i}_i)}\Big)
& 
= -\frac{1}{\bar \mu(\tau|A_i^{\delta,L_i,M_i})} 
\log \bar \mu(A_i^{\delta,L_i,M_i})
\\ &
 \le -\frac{1}{L_i} 
\log \bar \mu(\{L_i\le \tau \le M_i\} \cap A_i^{\delta}).
\end{split}
\end{equation*}
Thus, from  Lemma~\ref{l:xidown}
 \begin{equation*}
\varlimsup_{L_i\to +\infty}   \varlimsup_{M_i \to +\infty}
 \Big( \frac{\tau(x)
 \bar \mu(dx |A^{\delta,L_i,M_i}_i)}{\bar \mu(\tau|A^{\delta,L_i,M_i}_i)}\Big)
   \le \xi^{A^{\delta}_i}.
\end{equation*}
Now, since $\xi \ge \xi^{A^{\delta}_i}$ on ${A^{\delta}_i}$
\begin{equation*}
\varlimsup_{\mb L\to +\infty}   \varlimsup_{\mb M \to +\infty} \sum_{i=1}^{n^\delta} 
\nu_s(A_i^\delta)
I \Big( \frac{\tau(x)
 \bar \mu(dx |A_i^{\delta,L_{i,k}^\delta})}{\bar \mu(\tau|A_i^{\delta,L_{i,k}^\delta})}\Big)\Big]
\le \sum_{i=1}^{n^\delta} \nu_s(A_i^\delta)  \xi^{A^{\delta}_i}
\le \nu_s(\xi).
\end{equation*}

Together with \eqref{e:convexbound} this implies
\begin{equation*}
\sup_{\delta>0} \varlimsup_{\mb L\to +\infty}   \varlimsup_{\mb M \to +\infty} 
J(\nu^{\delta,\mb L,\mb M} )  \le I(\nu).
\end{equation*}
Combining this with \eqref{e:nuconv}, by a standard diagonal argument, there exists a sequence $\nu^n= \nu^{\delta^n,\mb L^n,\mb M^n}$ converging to $\nu$ such that $\varlimsup_n I(\nu^n)\le I(\nu)$.
\end{proof}

\section{Large deviations of the empirical measure}
\label{s:3}
The following identity follows immediately from \eqref{e:pi}, and will come handy in this section.
\begin{equation}
\label{e:pi2}
\pi_t=  \frac{1}{t} \sum_{i=1}^{\mc N_t}
\tau(x_i) \delta_{x_i} + \frac{t -S_{\mc N_t}}t \delta_{x_{\mc N_t}+1}.
\end{equation}

\begin{lemma}
 \label{l:freee}
 Let $f \colon \mc X \to [-\infty,+\infty]$ be a measurable function
 such that $\bar \mu(e^{\tau\,f})\le 1$.  Then
\begin{equation*}
  \sup_{t \ge 1} \frac{1}{t} \mb E 
        \exp[t\,\pi_t(f)] <+\infty.
\end{equation*}
\end{lemma}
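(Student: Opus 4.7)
The plan is to sandwich $e^{t\pi_t(f)}$ between two evaluations of the multiplicative supermartingale
\[
M_n := \exp\Big(\sum_{i=1}^n \tau(x_i) f(x_i)\Big)
\]
and control each piece by a different technique. From \eqref{e:pi2},
\[
t\pi_t(f) = \sum_{i=1}^{\mc N_t} \tau(x_i) f(x_i) + (t - S_{\mc N_t})\, f(x_{\mc N_t+1}),
\]
and since $0 \le t - S_{\mc N_t} \le \tau(x_{\mc N_t+1})$, the boundary term is bounded above by $\tau(x_{\mc N_t+1}) f^+(x_{\mc N_t+1})$. Combined with the pointwise inequality $e^{a^+} \le e^a + 1$, this yields the key estimate
\[
e^{t\pi_t(f)} \le M_{\mc N_t+1} + M_{\mc N_t}.
\]

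The first term is handled by optional stopping. Because $\bar\mu(e^{\tau f}) \le 1$, the process $(M_n)_{n\ge 0}$ is a non-negative supermartingale for the filtration $\mc F_n := \sigma(x_1, \ldots, x_n)$. The index $N := \mc N_t+1$ is a stopping time, since $\{N \le n\} = \{S_n \ge t\} \in \mc F_n$, and it is a.s.\ finite because $S_n \uparrow +\infty$ (an easy consequence of \refa{1} applied to $\tau(x_i) \wedge 1$). Doob's optional sampling theorem for non-negative supermartingales therefore gives $\mb E\, M_{\mc N_t+1} \le 1$.

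The term $\mb E\, M_{\mc N_t}$ requires more care, since $\mc N_t$ itself is \emph{not} a stopping time (it is adapted to $\mc F_{\mc N_t+1}$ but not $\mc F_{\mc N_t}$). Writing $\un{\mc N_t = n} \le \un{S_n \le t}$ yields $\mb E\, M_{\mc N_t} \le \sum_{n \ge 0} \mb E[M_n \un{S_n \le t}]$. Set $\alpha := \bar\mu(e^{\tau f}) \in (0,1]$ (the case $\alpha = 0$ is trivial, as then $M_n \equiv 0$ for $n \ge 1$), and introduce the tilted probability $\tilde \mu := \alpha^{-1} e^{\tau f} \bar\mu$, which has no atom on $\{\tau = 0\}$ by \refa{1}. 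Under the product measure $\tilde{\mb P} := \tilde\mu^{\otimes \bb N^+}$ one has $\mb E[M_n \un{S_n \le t}] = \alpha^n\, \tilde{\mb P}(S_n \le t) \le \tilde{\mb P}(S_n \le t)$, so summing gives
\[
\sum_{n \ge 0} \mb E[M_n \un{S_n \le t}] \le \mb E_{\tilde{\mb P}}[\mc N_t + 1].
\]
This is the expected number of renewals at time $t$ of a classical renewal process whose inter-arrival law has no atom at $0$; standard arguments (pick $\delta, p > 0$ with $\tilde{\mb P}(\tau(x_1) > \delta) \ge p$, then apply a Chernoff bound on the binomial count of inter-arrivals exceeding $\delta$) yield $\mb E_{\tilde{\mb P}}[\mc N_t + 1] \le C(1+t)$.

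Combining the two bounds gives $\mb E\, e^{t\pi_t(f)} \le 1 + C(1+t)$, which proves the lemma. The principal obstacle is that $f$ is allowed to take negative (or even $-\infty$) values, which prevents one from absorbing the boundary contribution $(t - S_{\mc N_t}) f(x_{\mc N_t+1})$ into the full stopped sum $\sum_{i=1}^{\mc N_t+1} \tau(x_i) f(x_i)$ without reversing the inequality; the two-sided decomposition $M_{\mc N_t+1} + M_{\mc N_t}$ circumvents this, at the price of trading a single martingale estimate for a combined stopping-time-plus-renewal estimate.
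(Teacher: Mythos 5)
Your proof is correct, and it takes a recognisably different route from the paper's. Both proofs hinge on the same two ingredients, tilting by $e^{\tau f}$ and the linear growth of $\mb E[\mc N_t]$ under the tilted measure, but they are assembled differently. The paper first reduces to $\bar\mu(e^{\tau f})=1$, decomposes on $\{\mc N_t=n\}$, tilts the first $n$ coordinates, and then bounds the ``overshoot'' integral $\int_{\{\tau\ge t-s\}}\bar\mu(dx)\,e^{(t-s)f(x)}$ by $2$ using the pointwise inequality $e^{(t-s)f}\le 1+e^{\tau f}$ on $\{\tau\ge t-s\}$; it then invokes the elementary renewal theorem for $\mb E_f[\mc N_t]/t$. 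You instead apply essentially the same pointwise inequality ($e^{sb}\le 1+e^b$ for $s\in[0,1]$) \emph{before} taking expectations, obtaining the cleaner pathwise bound $e^{t\pi_t(f)}\le M_{\mc N_t}+M_{\mc N_t+1}$, and then handle the two pieces by genuinely different tools: the $M_{\mc N_t+1}$ term via optional stopping of the nonnegative supermartingale $(M_n)$ at the a.s.\ finite stopping time $\mc N_t+1$ (which the paper never uses), and the $M_{\mc N_t}$ term via tilting plus a self-contained negative-binomial/Chernoff bound on $\mb E_{\tilde{\mb P}}[\mc N_t+1]$ in place of citing the renewal theorem. Your version also handles $\alpha=\bar\mu(e^{\tau f})<1$ directly, whereas the paper's opening reduction to $\alpha=1$ is stated without comment (and is not entirely innocent, since scaling $f$ is not available; one must, e.g., raise $f$ on a set). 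The trade-off is roughly: the paper's argument is shorter once the renewal theorem is granted, while yours is more self-contained, makes the supermartingale structure explicit, and makes it clear that the only input from renewal theory is $\mb E[\mc N_t]\lesssim t$ when $\tau$ has no atom at $0$, rather than the sharper asymptotic $\mb E[\mc N_t]/t\to 1/\bar\mu_f(\tau)$.

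One small presentational point: when you say ``apply a Chernoff bound on the binomial count,'' the cleanest phrasing of what you need is that, with $\delta,q>0$ such that $\tilde{\mb P}(\tau>\delta)=q\ge p$, the dominated renewal process with inter-arrivals $\delta\un{\{\tau>\delta\}}$ has $\mc N'_t+1$ equal to the waiting time for $\lceil t/\delta\rceil$ successes in i.i.d.\ Bernoulli$(q)$ trials, whence $\mb E_{\tilde{\mb P}}[\mc N_t+1]\le\mb E_{\tilde{\mb P}}[\mc N'_t+1]=\lceil t/\delta\rceil/q\le C(1+t)$; no large-deviation estimate on the binomial is actually required, just its mean.
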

\begin{proof}
It is enough to prove the result in the case $\bar
  \mu(e^{\tau\,f})=1$. Then define $\bar \mu_f \in \mc P(\mc X)$ as
  \begin{equation*}
\bar \mu_f(dx):= e^{\tau(x)\,f(x)} \bar \mu(dx).
\end{equation*}
Thus
\begin{equation*}
\begin{split}
 \mb E \exp[t\, \pi_t(f)]
& \ = 
 \sum_{n=0}^\infty  \mb E  \exp \big[ \sum_{i=1}^n \tau(x_i)\,f(x_i) 
                + (t-S_n)\,f(x_{n+1}) \big] \un {\mc N_t = n}
\\ 
& \quad =
\sum_{n=0}^\infty  \int_{\mc X^{n+1}} 
            \Big(\prod_{i=1}^n \bar \mu_f(dx_i) \Big) 
                              \bar \mu(dx_{n+1})\,
 \exp[(t-S_{n}) f(x_{n+1})] \un {\mc N_t=n}.
\end{split}
\end{equation*}
Note that $\{\mc N_t=n\} = \{S_n < t \} \cap \{\tau(x_{n+1}) \ge
t-S_n\}$, so that denoting $\zeta_{n,f} \in \mc P([0,+\infty])$ the
law of $S_n=\tau(x_1)+\ldots+\tau(x_n)$ with respect to $\prod_{i=1}^n \bar
\mu_f(dx_i)$
\begin{equation*}
 \mb E \exp[t\, \pi_t(f)] 
   = \sum_{n=0}^\infty \int_{[0,t[}\zeta_{n,f}(ds) 
                 \int_{\{\tau \ge t-s\} } \bar \mu(dx) e^{(t-s) f(x)}.
\end{equation*}
The rightest integral is bounded by $2$, since $e^{(t-s) f(x)} \le 1 +
e^{\tau(x) f(x)}$ on $\{ \tau \ge t-s\}$. Thus
\begin{equation*}
 \frac{1}{t}  \mb E \exp[t\, \pi_t(f)] 
   \le \frac{2}{t} \sum_{n=0}^\infty \zeta_{n,f}([0,t))
 = \frac{2}{t} \sum_{n=0}^\infty  \mb E_f \un {\mc N_t \ge n} 
    = 2\, \mb E_f \frac{\mc N_t}{t},
\end{equation*}
where $\mb E_f$ denotes expectation with respect to $\bar
\mu_f^{\otimes \bb N^+}$. By general renewal theory
\cite[Chapter~V.4]{A}, $\mb E_f \mc N_t/t \to \frac{1}{\bar
  \mu_f(\tau)}<+\infty$ as $t \to +\infty$.
\end{proof}

\begin{proof}[Proof of Theorem~\ref{t:ld1}, upper bound]
  Fix $\mc O$ an open subset of $\mc P(\mc X)$. Then for each $f \in
  LSC(\mc X)$ such that $\bar \mu (e^{\tau f})\le 1$
\begin{equation*}
    \begin{split}
      & \frac{1}{t} \log \mb P_t(\mc O) = \frac{1}{t} \log \mb E e^{-t
        \pi_t(f)} e^{t \pi_t(f)} \un {\pi_t \in \mc O}
      \\
      & \qquad \le \frac{1}{t} \log \big[ e^{-t \inf_{\nu \in \mc O} \nu(f)}
      \mb E e^{t \pi_t(f)}\big] 
       = -\inf_{\nu \in \mc O} \nu(f)+
      \frac{1}{t} \log \mb E e^{t \pi_t(f)}.
     \end{split}
\end{equation*}
By taking the limsup $t \to \infty$, the last term in the above
formula vanishes by Lemma~\ref{l:freee}. Optimizing over $f$
 \begin{equation}
\label{e:laplace}
\varlimsup_t \frac{1}{t} \log \mb P_t(\mc O)
\le -\sup \{ \inf_{\nu \in \mc O} \nu(f), f\in LSC(\mc X)\,:
 \:\bar \mu (e^{\tau f})\le 1\}.
  \end{equation}
  Since \eqref{e:laplace} holds true for each open set $\mc O \subset
  \mc X$, and $\nu \mapsto \nu(f)$ is lower semicontinuous for $f\in
  LSC(\mc X)$, the minimax lemma \cite[Appendix~2, Lemma~3.3]{KL}
  yields
 \begin{equation*}
\varlimsup_t \frac{1}{t} \log \mb P_t(\mc K)
\le -\inf_{\nu \in \mc K} \sup\{ \nu(f), f\in LSC(\mc X)\,:
 \:\bar \mu (e^{\tau f})\le 1\}
  \end{equation*}
  for each compact $\mc K \subset \mc P(\mc X)$. By
  Lemma~\ref{p:Ibound}, the large deviations upper bound then holds
  true on compact sets. But closed sets are compact since $\mc P(\mc
  X)$ is compact.
\end{proof}

The following remark provides a standard approach for proving large
deviations lower bounds, see for instance \cite{M1} and references therein.
\begin{remark}
\label{r:ldgamma}
If for each $\nu \in \mc P(\mc X)$ there exists a sequence
  $(\mb Q_t)$ in $\mc P(\mc P(\mc X))$ such that $\lim_t \mb
  Q_t=\delta_\nu$ narrowly in $\mc P(\mc P(\mc X))$ and
\begin{equation*}
\varlimsup_t
  \frac{1}{t} \mb H(\mb Q_t|\mb P_t) \le J(\nu),
\end{equation*}
then $({\mb P}_t)_{t>0}$ satisfies a large deviations lower bound with rate
given by the lower semicontinuous envelope of $J$.
\end{remark}

For $t>0$ let $\mf F_t$ be the smallest $\sigma$-algebra on $\mc X^{\bb N^+}$ such that the map
 \begin{equation*}
\mc X^{\bb N^+} \ni \mb x \mapsto (x_1,\ldots,\,x_{\mc N_t(\mb x)+1}) \in \cup_{n \in \bb N^+} \mc X^n 
\hookrightarrow \mc X^{\bb N^+} 
 \end{equation*}
is Borel measurable. Note in particular that $\mc N_t 
\colon \mc X^{\bb N^+} \to \bb N$ and
$\pi_t \colon \mc X^{\bb N^+} \to \mc P(\mc X)$ are $\mf F_t$ measurable (with respect to the discrete $\sigma$-algebra of $\bb N$ and the Borel $\sigma$-algebra on $\mc P(\mc X)$ respectively).

\begin{lemma}
\label{l:relent}
Let $\mc Y$ be a Polish space, $F\colon \mc X^{\bb N^+} \to \mc Y$ a $\mf F_t$-Borel measurable map, $(\bar 
\mu_i)_{i\in \bb N^+}$, $(\bar \nu_i)_{i\in \bb N^+}$ be sequences in $\mc P(\mc X)$ and set $\Omega^\mu:=
\prod_{i \in \bb N^+} \bar \mu_i$, $\Omega^\nu:=\prod_{i \in \bb N^+} \bar \nu_i$. Let $\mb P^F,\,\mb Q^F \in \mc 
P(\mc Y)$ be the laws of $F$ under $\Omega^\mu$ and $\Omega^\nu$ respectively. Then
\begin{equation*}
\mb H(\mb Q^F|\mb P^F) \le \sum_{j=1}^\infty  \mb H(\bar \nu_j|\bar \mu_j)\; \Omega^\nu(\mc N_t\ge j-1).
\end{equation*}
In particular, if $\bar \mu_i=\bar \mu$ and $\bar \nu_i=\bar \nu$, then
\begin{equation*}
\mb H(\mb Q^F|\mb P^F) \le \mb H(\bar \nu|\bar \mu)\, \Omega^\nu(\mc N_t+1).
\end{equation*}
\end{lemma}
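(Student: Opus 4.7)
The plan is to combine the data processing (contraction) inequality for relative entropy with a direct Radon--Nikodym computation on $\mf F_t$, whose identification as a stopped $\sigma$-algebra makes the bound come out from the independence of the coordinates.

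First, since $F$ is $\mf F_t$-Borel measurable, contraction of relative entropy under pushforward gives
\begin{equation*}
\mb H(\mb Q^F|\mb P^F)\le \mb H\big(\Omega^\nu|_{\mf F_t}\,\big|\,\Omega^\mu|_{\mf F_t}\big),
\end{equation*}
so it suffices to bound the right-hand side by $\sum_j \mb H(\bar\nu_j|\bar\mu_j)\,\Omega^\nu(\mc N_t\ge j-1)$. One may assume $\bar \nu_j\ll \bar \mu_j$ for every $j$ with $\Omega^\nu(\mc N_t\ge j-1)>0$; otherwise a direct construction (an $\mf F_t$-measurable event of positive $\Omega^\nu$-mass and zero $\Omega^\mu$-mass) shows that both sides are $+\infty$.

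Next, set $\mf G_n:=\sigma(x_1,\dots,x_n)$ and $\sigma:=\mc N_t+1$. Since $\{\sigma=n\}=\{S_{n-1}<t\le S_n\}\in\mf G_n$, $\sigma$ is a $(\mf G_n)$-stopping time and $\mf F_t=\mf G_\sigma$. For $A\in \mf F_t$, each slice $A\cap\{\sigma=n\}$ is determined by $(x_1,\dots,x_n)$ alone, so
\begin{equation*}
\Omega^{\nu}(A\cap\{\sigma=n\})=\int_{A\cap\{\sigma=n\}}\Big(\prod_{j=1}^{n}\rho_j(x_j)\Big)\,d\Omega^{\mu},\qquad \rho_j:=\tfrac{d\bar\nu_j}{d\bar\mu_j},
\end{equation*}
which, summed over $n$, identifies $(d\Omega^\nu/d\Omega^\mu)|_{\mf F_t}(\mb x)=\prod_{j=1}^{\sigma(\mb x)}\rho_j(x_j)$. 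Taking logarithms and integrating against $\Omega^\nu$ yields
\begin{equation*}
\mb H\big(\Omega^\nu|_{\mf F_t}\,\big|\,\Omega^\mu|_{\mf F_t}\big)=\sum_{j=1}^\infty \mb E_{\Omega^\nu}\!\big[\log\rho_j(x_j)\,\un{\mc N_t\ge j-1}\big].
\end{equation*}
The key observation is that $\{\mc N_t\ge j-1\}=\{S_{j-1}<t\}\in \mf G_{j-1}$, so it is independent of $x_j$ under the product $\Omega^\nu$, and each summand collapses to $\mb H(\bar\nu_j|\bar\mu_j)\,\Omega^\nu(\mc N_t\ge j-1)$, giving the first claimed inequality. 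For the homogeneous case, pulling the common factor $\mb H(\bar\nu|\bar\mu)$ out of the sum and using $\sum_{k\ge 0}\un{\mc N_t\ge k}=\mc N_t+1$ gives the second claim.

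The only delicate point is the exchange of sum and expectation in the penultimate display, since $\log\rho_j$ is not signed. I would split $\log\rho_j=(\log\rho_j)_+-(\log\rho_j)_-$ and apply Tonelli to each part. The negative part is controlled by the elementary inequality $\bar\nu_j\big[(\log\rho_j)_-\big]=\int_{\rho_j\le 1}\rho_j\log(1/\rho_j)\,d\bar\mu_j\le e^{-1}$, so its total contribution is at most $e^{-1}\,\Omega^\nu(\mc N_t+1)$, which is finite whenever the asserted bound is, the only case needing proof.
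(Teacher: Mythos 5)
Your proof follows essentially the same route as the paper's: reduce to the relative entropy of the two product measures restricted to $\mf F_t$, identify the Radon--Nikodym derivative on $\mf F_t$ as $\prod_{j=1}^{\mc N_t+1}\rho_j(x_j)$ on each slice $\{\mc N_t=n\}$, and factor the $j$-th term using that $\{\mc N_t\ge j-1\}$ is $\sigma(x_1,\ldots,x_{j-1})$-measurable and hence independent of $x_j$ under the product. The paper's step via conditional Jensen for the convex $h$ is precisely the data-processing/contraction inequality you invoke, and your stopped-$\sigma$-algebra identification $\mf F_t=\mf G_\sigma$ is a clean reformulation, so the differences are presentational; your attention to the sum/expectation interchange (which the paper passes over) and the bound $\bar\nu_j\big[(\log\rho_j)_-\big]\le e^{-1}$ are correct. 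The one imprecision is the last clause: in the inhomogeneous case one can have $\Omega^\nu(\mc N_t+1)=+\infty$ while $\sum_j\mb H(\bar\nu_j|\bar\mu_j)\,\Omega^\nu(\mc N_t\ge j-1)<+\infty$ (if the entropies decay), so finiteness of the asserted bound does not by itself make $e^{-1}\Omega^\nu(\mc N_t+1)$ finite. This is easily patched and is anyway moot in the homogeneous case, where $\mb H(\bar\nu|\bar\mu)>0$ and a finite right-hand side forces $\Omega^\nu(\mc N_t+1)<+\infty$, which is all the proof of Theorem~\ref{t:ld1} needs.
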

\begin{proof}
For $r>0$ let (as above) $h(r)= r(\log r -1)+1$, and let $\mf F^{F} \subset \mf F_{t}$ be the $\sigma$-algebra generated by $F
$. Then 
for $\Omega^\mu$-a.e.\ $\mb x$
\begin{equation*}
\frac{d \mb Q^F}{d \mb P^F}(F(\mb x)) 
 =  \frac{d \Omega^\nu \circ F^{-1}}
            {d \Omega^\mu \circ F^{-1}}(F(\mb x))
 = \Omega^\mu \big(\frac{d\Omega^\nu}{d \Omega^\mu} \big| \mf F^F\big)(\mb x).
 \end{equation*}
Therefore changing variables in the integration and using the convexity of $h$
\begin{equation*}
\begin{split}
\mb H(\mb Q^F|\mb P^F) & = \int_{\mc Y} \mb P^F(dy)\, h\big(\frac{d \mb Q^F}{d \mb P^F}(y)\big)
\\ & = 
\int_{\mc X^{\bb N^+}} \!\!\! \Omega^\mu(d\mb x)\,h\big(\Omega^\mu \big(\frac{d\Omega^\nu}{d \Omega^\mu} 
\big| \mf F^F\big)(\mb x)\big)
\le
\int_{\mc X^{\bb N^+}} \!\!\! \Omega^\mu(d\mb x)\,h\big(\Omega^\mu \big(\frac{d\Omega^\nu}{d \Omega^\mu} 
\big| \mf F_t\big)(\mb x)\big).
\end{split}
\end{equation*}
For $n\in \bb N$, and $\mb x$ such that $\mc N_t(\mb x)=n$ one has $\Omega^\mu \big(\frac{d\Omega^\nu}{d \Omega^\mu} \big| \mf F_t\big)(\mb x) 
= \prod_{j=1}^{n+1}\frac{d\nu_j}{d\mu_j}(x_j)$

and thus
\begin{equation*}
\begin{split}
\mb H(\mb Q^F|\mb P^F) & \le
\sum_{n \in \bb N} \int_{\mc X^{n+1}} \prod_{i=1}^{n+1}\mu_i(dx_i)\,h\big(\prod_{j=1}^{n+1}\frac{d\nu_j}{d\mu_j}
(x_j) \big)\, \un {\mc N_t(\mb x)=n}
\\  & = 
\sum_{n \in \bb N} \int_{\mc X^{n+1}} \prod_{i=1}^{n+1}\nu_i(dx_i) \,
               \log \big(\prod_{j=1}^{n+1}\frac{d\nu_i}{d\mu_j}(x_j) \big)\, \un {\mc N_t(\mb x)=n}
\\ & =
\sum_{j \in \bb N^+}  \int_{\mc X^{j}} \prod_{i=1}^{j}\nu_i(dx_i)  \log \frac{d\nu_j}{d\mu_j}(x_j)\, \un {\mc N_t(\mb x)
\ge j-1}.
\end{split}
\end{equation*}
The event  $\{\mc N_t(\mb x) \ge j-1\}$ only depends on $(x_1,\ldots,x_{j-1})$. Therefore the last integral in the above formula splits into a product as
\begin{equation*}
\begin{split}
\mb H(\mb Q^F|\mb P^F) & \le
\sum_{j \in \bb N^+}  \int_{\mc X^{j-1}} \!\! \prod_{i=1}^{j-1 }\nu_i(dx_i)\, \un {\mc N_t(\mb x)\ge j-1}
\int_{\mc X} \nu_j(dx_j)\, \log \frac{d\nu_j}{d\mu_j}(x_j)
\end{split}
\end{equation*}
which is easily rewritten as in the statement.
\end{proof}

\begin{proof}[Proof of Theorem~\ref{t:ld1}, lower bound]
  In view of Proposition~\ref{p:jlsce}, and Remark~\ref{r:ldgamma},
  for each $\nu \in \mc P(\mc X)$ such that $J(\nu)<+\infty$, one
  needs to find a sequence $(\mb Q_t)$ in $\mc P(\mc P(\mc X))$ such
  that $\mb Q_t \to \delta_\nu$ narrowly and $\varlimsup_t \frac{1}{t}
  \mb H(\mb Q_t | \mb P_t) \le J(\nu)$.

  Fix a $\nu \in \mc P(\mc X)$ absolutely continuous with respect to
  $\bar \mu$ and such that $\nu(1/\tau)\in ]0,+\infty[$, and let 
  $\Omega^\nu(d\mb x):= \prod_{i \in \bb N^+} \bar \nu(dx_i)$ as in Lemma~\ref{l:relent}. Set $\mb Q_t:=\Omega^
\nu \circ \pi_t^{-1}$. Since $\nu(1/\tau)<+\infty$, ergodic theorem yields $\lim_t \mb Q_t = \delta_\nu$. On the other hand, since $\pi_t$ is $\mf F_t$ measurable, one may apply Lemma~\ref{l:relent} with $F=\pi_t$ to 
get
 \begin{equation}
\label{e:Hbound}
  \begin{split}
\frac{1}{t} \mb H(\mb Q_t|\mb P_t) & \le \mb H(\bar \nu | \bar \mu) 
     \frac{\Omega^\nu(\mc N_t+1)}{t}.
  \end{split}
\end{equation}
The renewal theorem \cite[Chapter~V.4]{A} implies $\lim_t \Omega^\nu(\mc N_t)/t= \nu(1/\tau)$, which concludes the proof.
\end{proof}

\end{document}